\begin{document}
\parskip=6pt
\theoremstyle{plain}
\newtheorem{prop}{Proposition}[section]
\newtheorem{lem}[prop]{Lemma}
\newtheorem{thm}[prop]{Theorem}
\newtheorem{cor}[prop]{Corollary}
\newtheorem{defn}[prop]{Definition}
\theoremstyle{definition}
\newtheorem{example}[prop]{Example}
\theoremstyle{remark}
\newtheorem{remark}[prop]{Remark}
\numberwithin{prop}{section}
\numberwithin{equation}{section}
\def\cal{\mathcal}
\newcommand{\cF}{\cal F}
\newcommand{\cA}{\cal A}
\newcommand{\cC}{\cal C}
\newcommand{\cO}{{\cal O}}
\newcommand{\cE}{{\cal E}}
\newcommand{\cU}{{\cal U}}
\newcommand{\cM}{{\cal M}}
\newcommand{\cD}{{\cal D}}
\newcommand{\cK}{{\cal K}}
\newcommand{\cZ}{{\cal Z}}
\newcommand{\bC}{\mathbb C}
\newcommand{\bP}{\mathbb P}
\newcommand{\bN}{\mathbb N}
\newcommand{\bA}{\mathbb A}
\newcommand{\bR}{\mathbb R}
\newcommand{\oP}{\overline P}
\newcommand{\oQ}{\overline Q}
\newcommand{\oR}{\overline R}
\newcommand{\oS}{\overline S}
\newcommand{\oc}{\overline c}
\newcommand{\bp}{\mathbb p}
\newcommand{\oD}{\overline D}
\newcommand{\oE}{\overline E}
\newcommand{\oC}{\overline C}
\newcommand{\of}{\overline f}
\newcommand{\ou}{\overline u}
\newcommand{\ow}{\overline w}
\newcommand{\oy}{\overline y}
\newcommand{\oz}{\overline z}
\newcommand{\hg}{\hat G}
\newcommand{\hM}{\hat M}
\newcommand{\tpr}{\widetilde {\text{pr}}}
\newcommand{\tB}{\widetilde B}
\newcommand{\tx}{\widetilde x}
\newcommand{\ty}{\widetilde y}
\newcommand{\txi}{\widetilde \xi}
\newcommand{\teta}{\widetilde \eta}
\newcommand{\tna}{\widetilde \nabla}
\newcommand{\tth}{\widetilde \theta}
\newcommand{\diml}{\text{dim}}
\newcommand{\var}{\varepsilon}
\newcommand{\End}{\text{End }}
\newcommand{\loc}{\text{loc}}
\newcommand{\lam}{\lambda}
\newcommand{\Hom}{\text{Hom}}
\newcommand{\Ker}{\text{Ker}}
\newcommand{\dist}{\text{dist}}
\newcommand{\psl}{\rm{PSL}}
\newcommand{\rk}{\roman{rk}}
\newcommand{\In}{\text{int}\,}
\newcommand{\Id}{\text{Id}}
\newcommand{\id}{\text{id}}
\renewcommand\qed{ }
\begin{titlepage}\title{ Adapted complex and involutive structures}
\author{L\'aszl\'o Lempert \thanks{Research partially  supported by NSF grant DMS 1764167.  The paper was written at the E\"otv\"os University, Budapest while I was on sabbatical leave from Purdue University. I am grateful to both institutions.}\\ Department of  Mathematics\\
Purdue University\\West Lafayette, IN
47907-2067, USA}
\end{titlepage}
\date{}
\maketitle
\abstract
We define and study complex structures and generalizations on spaces consisting of geodesics or harmonic maps that are compatible with the symmetries of these spaces. The main results are about existence and uniqueness of such structures.
\endabstract

\section{Introduction}

This paper addresses instances of the following general question: What natural structures exist on the space of solutions of a given differential equation? The simplest answer is, a vector space structure, if the differential equation is linear. This is probably also the most momentous answer, as it led to the birth of linear algebra, see \cite{Gr}. However, in this paper we will consider certain nonlinear equations and the structures sought will be geometric rather than algebraic. This line of research is over a hundred years old, with perhaps the first result going back to W\"unschmann, who in his thesis \cite{W} found a class of scalar third order ordinary differential equations on the solution space of which he could define a canonical conformal structure. Subsequently E.~Cartan and Chern further developed the subject, that is still alive, see \cite{Ca, Ch, GN}. Conformal structures also appear in Penrose's twistor theory \cite{P, AHS} on the space of global solutions of certain partial differential equations (Cauchy--Riemann equations on certain holomorphic vector bundles).

The structures in this paper are not conformal but complex structures and generalizations that are adapted in a sense to symmetries of the spaces of solutions. Adapted complex structures first appeared in a paper by Sz\H{o}ke and this author \cite{LSz1} as complex structures on the tangent bundle of a Riemannian manifold $M$ that are compatible in a certain sense with the Riemannian metric. Simultaneously Guillemin and Stenzel in \cite{GS} investigated related complex structures on the cotangent bundle of $M$; the two structures turn out to be the same if $TM$ and $T^*M$ are identified using the metric. Subsequently Bielawski, Duchamp--Kalka, Hall--Kirwin, Sz\H{o}ke, and Ursitti \cite{Bi, DK, HK1, HK2, Sz4, U} proposed generalizations that involved manifolds with connection, Finsler  metrics, sub-Riemannian manifolds and Riemannian manifolds endowed with a closed 2-form. Independently, Thiemann \cite{Th} constructed related complex structures for general analytic Hamiltonian systems.

In \cite{LSz2} with Sz\H{o}ke we proposed yet another look at adapted complex structures. The idea was first to renounce $TM$ or $T^*M$ as the manifold underlying adapted complex structures and consider instead the manifold $N$ of geodesics in the (say, complete) Riemannian manifold $M$. This $N$ can be identified with $TM$ or $T^*M$, but it has the advantage that it naturally carries an action of the group $G$ of affine transformations $\bR\to \bR$ (reparametrizations of geodesics). The second part of the idea was then to endow $G$ with the structure of a complex manifold, and search for complex structures on $N$ such that the two complex structures are 
compatible via the action. The precise meaning of this compatiblity will be explained in Section 2, in greater generality. What matters, though, is that it makes sense regardless of geodesics and reparametrizations and one can speak of adapted complex structures whenever a Lie group $G$, endowed with a complex structure, acts on a manifold $N$. Nevertheless, when $N$ is the manifold of geodesics in a Riemannian manifold, acted on by affine reparametrizations, the new notion of adapted complex structures essentially agrees with the one in \cite{LSz1}.

In this paper we introduce a generalization, so-called involutive structures adapted to group actions. Although in this context group actions are a little too restrictive for interesting applications and in the main body of the paper we work with actions of what we call Lie monoids, to explain the main notions here we confine ourselves to group actions. An involutive structure on a smooth ($=C^\infty$) manifold $N$ is a smooth subbundle $P\subset\bC TN=\bC\otimes_{\bR}TN$ that is involutive in the sense that the Lie bracket of its (smooth, local) sections is again a section (see \cite{Tr}). For example, an involutive structure with $P\oplus\oP=\bC TN$ is the same as a complex structure, $P$ corresponding to the bundle of $(1,0)$ vectors. More generally, if $P\cap\oP=(0)$, we obtain a CR structure on $N$. At the other extreme $P=\oP$, which defines a foliation of $N$, $P$ being the complexified leafwise tangent bundle. By an involutive manifold we mean a smooth manifold $N$ endowed with an involutive structure $P$, and we use the notation $(N,P)$. If $(M,Q)$ is another involutive manifold, a $C^1$ map $f:(M,Q)\to (N,P)$ is called involutive if $f_* Q\subset P$.

Suppose that on a smooth manifold $N$ a Lie group $G$ acts on the right, and that $G$ is endowed with an involutive structure $S$. 

\begin{defn}
An involutive structure on $N$ is adapted  to the action of $(G,S)$ if for every $x\in N$ the orbit map 
\[G\ni g\mapsto xg\in N\]
is involutive.

\end{defn}

Adapted involutive structures first appeared in \cite{Sz3} in the context of Riemannian manifolds. Sz\H{o}ke, among other things, 
explains that while for a symmetric space  $M$ an adapted complex structure on $TM$ may only exist in a neighborhood of the zero section, it always 
extends to all of $TM$ as an adapted involutive structure.  In \cite{Sz5} he extended this result to manifolds endowed with 
a connection, whose curvature is covariantly constant.
CR structures that can be viewed as adapted also appear in Aguilar's quotient 
construction in \cite{A}. In \cite{LSz2} with Sz\H{o}ke we introduced, already in the context of actions, adapted polarizations. These are 
involutive structures $P$ with $2\, {\text{rk}}_\bC\, P=\dim_\bR N$. Overall, adapted involutive structures seem to be a natural notion worth studying. If adapted complex structures appear in spaces of geodesics, in section 4 we show that certain spaces of harmonic maps carry adapted involutive structures. Adapted involutive structures can be useful: in \cite{L2} we base on them the proof of a regularity result in Riemannian geometry.

The main results of this paper concern existence and uniqueness of adapted involutive structures, see Sections 3, 4, and 7. Both types of results depend on assumptions that we make on $G$, its action on $N$, on the involutive structure $S$ of $G$ and even some crude assumptions on the involutive structure $P$ of $N$. Once the right assumptions are made, the existence result follows rather easily via the device of complexification. Uniqueness is more involved, and verifying the assumptions of, say, Theorem 7.3 can be challenging. We verify the assumptions in the case when $N$ is a space of geodesics, and $G$ is the affine group, in the proof of Theorem 7.5; but for the case of harmonic maps and $G=\text{PSL}_2(\bC)$ we refer to a forthcoming paper.---Sections 5, 6 develop geometric notions in involutive manifolds.

In making our definitions and assumptions we strove for a generality that includes the above two cases: $N$ the geodesics $x:[-r,r]\to M$ in a Riemannian manifold $M$, and the H\"older continuous harmonic maps $D\to M$ from the 2-disc that are close to constant. Since this latter is a Banach manifold, we allow $N$ to be such. On both spaces reparametrizations by certain affine, resp. conformal transformations act; which, however, do not form a group. For this reason in this paper we will deal with actions of monoids, rather than just groups. 

\section{Actions of Lie monoids}

Consider a finite dimensional smooth manifold $X$ and a subset $G\subset X$ that is regular closed (i.e., $G$ is the closure of its interior). By a smooth function or 
map on $G$ we mean one that is smooth on $\text{int}\,G$, and whose derivatives of all orders, computed in charts $U\subset X$, continuously extend 
from $U\cap \text{int}\,G$ to $U\cap G$. We call $TG=TX|G\to G$ the tangent bundle of $G$. Thus $TG\subset TX$ is also a regular closed 
subset. A 
smooth map $f:G\to Y$ into a smooth manifold $Y$ induces a smooth map $f_*: TG\to TY$, first on int$\,TG$ by the traditional recipe, then by 
continuous extension to all of $TG$.

\begin{defn} 
A Lie monoid is a regular closed subset $G$ of a smooth finite dimensional manifold, endowed with a smooth and associative multiplication $G\times G\ni(g,h)\mapsto gh\in G$ that is an open map, and with a two sided unit $e\in G$ for this multiplication. 
\end{defn}  

One example is, given $r>0$, affine transformations $t\mapsto a+bt$ of the real line such that $b>0$ and $|a|+br\le r$. This is 
viewed as a subset of the group of all affine transformations of $\bR$; it consists of those transformations that map $[-r,r]$ strictly increasingly into itself. Another is the set $G\subset\text {PSL}_2(\bC)$ of conformal maps of the Riemann sphere that map a fixed bounded convex domain $D\subset\bC$ into itself.

Consider next a smooth manifold $N$, that we allow to be a Banach manifold. For the foundations of infinite dimensional differential  geometry \cite{La} by Lang is a good source, whose terminology we will follow with the following exception. We call a $C^1$ map $f:M\to N$ of Banach manifolds {\sl direct} if for each $p\in M$ the subspaces Ker$f_*(p)\subset T_pM$ and Im$f_*(p)\subset T_{f(p)}N$ have closed complements. We say $f$ is immersive/submersive if Ker$f_*(p)=(0)$, resp. Im$f_*(p)=T_{f(p)}N$, while what Lang calls immersion/submersion are direct immersion/submersion in our terminology.

\begin{defn}
A right action of a Lie monoid $G$ on $N$ is a continuous map
\[N\times G\ni(x,g)\mapsto xg \in N\]
that is smooth on $N\times {\rm{int}}\, G$, and its higher derivatives along $N$ have continuous extensions to $N\times G$; furthermore
\[x(gh)=(xg)h\quad {\rm{and}}\quad xe=x\quad {\rm{for}}\quad x\in N,\, g, h\in G.\]
\end{defn} 

Given such an action we write
\begin{equation}  
xg=A_g x=\Omega_x g.
\end{equation}   
Thus $\Omega_x$ is the orbit map of $x$.---Left actions are defined analogously.

Finally we bring in involutive structures and their interplay with actions. The definition of an involutive structure $P$ on $N$ was already given in the Introduction. When $N$ is a Banach manifold, the requirement on $P$ is that it be a Banach subbundle of $\bC TN$. However, we need to specify what we mean by an involutive structure on a Lie monoid $G$ that is a regular closed subset of a manifold $X$. 

\begin{defn} 
An involutive structure on a Lie monoid $G$ is a continuous subbundle $S\subset \bC TG$ that is smooth and involutive over ${\rm{int}}\,G$.
\end{defn} 

We will call $G$ endowed with such an $S$ an involutive Lie monoid.

\begin{defn} 
Suppose that on a manifold $N$ a Lie monoid acts on the right, and $S\subset \bC TG$ is an involutive structure. An involutive structure $P$ on $N$ is adapted to the action of $(G, S)$ if for all $x\in N$ the map $\Omega_x |{\rm{int}}\,G:({\rm{int}}\,G, S)\to (N,P)$ is involutive, cf. (2.1). 
\end{defn} 

Often the existence of an adapted $P$ forces $S$ to be left invariant, i.e. such that the differential of left translation by any $g\in G$ maps $S$ into itself.

\begin{prop} 
Suppose an involutive structure $P$ on a manifold $N$ is adapted to the right action of an involutive Lie monoid $(G,S)$. If
\begin{equation}
S_g=(\Omega_{x*})^{-1} P_{xg}, \quad g\in{\rm{int}}\,G, \, x\in N
\end{equation}
(or only for all $g\in{\rm{int}}\,G$ and for some $x$ depending on $g$), then $S$ is left invariant.
\end{prop}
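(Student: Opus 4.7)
The plan is to read off left invariance of $S$ directly from the associativity of the action combined with the hypothesis (2.2), which pins $S$ down as the pullback of $P$ under an orbit map. The only content beyond bookkeeping is the observation that left translation by any fixed element preserves $\text{int}\,G$.

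First I would unpack associativity. For any $g_0\in G$, let $L_{g_0}\colon G\to G$ denote left translation $h\mapsto g_0 h$. The identity $x(g_0 h)=(xg_0)h$ reads
\begin{equation*}
\Omega_x\circ L_{g_0}=\Omega_{xg_0},
\end{equation*}
and differentiating (and extending $\bC$-linearly to $\bC TG$ and $\bC TN$) gives
\begin{equation*}
(\Omega_x)_*\circ (L_{g_0})_*=(\Omega_{xg_0})_*
\end{equation*}
on tangent spaces. Next I would verify that $L_{g_0}({\rm int}\,G)\subset {\rm int}\,G$. By Definition 2.1, the multiplication map $G\times G\to G$ is open, so for $h\in{\rm int}\,G$ pick an open neighbourhood $V\times U$ of $(g_0,h)$ with $U\subset{\rm int}\,G$; its image $VU$ is open in $G$ and contains $g_0h$, so $g_0h\in{\rm int}\,G$.

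Now fix $g_0\in G$, take $h\in{\rm int}\,G$, and let $v\in S_h$; I want $(L_{g_0})_*v\in S_{g_0h}$. Since $g_0h\in{\rm int}\,G$, the hypothesis supplies some $x\in N$ with $S_{g_0h}=(\Omega_x)_*^{-1}P_{x g_0 h}$. Using the differentiated associativity,
\begin{equation*}
(\Omega_x)_*\bigl((L_{g_0})_*v\bigr)=(\Omega_{xg_0})_*v.
\end{equation*}
Because $P$ is adapted to $(G,S)$, the orbit map $\Omega_{xg_0}$ is involutive on ${\rm int}\,G$, so $(\Omega_{xg_0})_*v\in P_{(xg_0)h}=P_{xg_0h}$. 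Hence $(L_{g_0})_*v\in(\Omega_x)_*^{-1}P_{xg_0h}=S_{g_0h}$, as needed.

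This proves $(L_{g_0})_*S_h\subset S_{g_0h}$ for every $h\in{\rm int}\,G$. To extend this inclusion to arbitrary $h\in G$, I would invoke continuity of $S$ (as a continuous subbundle of $\bC TG$) and of $(L_{g_0})_*$, approximating a boundary point $h$ by a sequence in ${\rm int}\,G$ and using the regular-closedness of $G$. There is no real obstacle here; the argument is essentially a diagram chase, and the only place one has to be slightly careful is to ensure that the $x$ provided by (2.2) is attached to the relevant point $g_0h$ (rather than to $h$), which is why the weak form of the hypothesis, allowing $x$ to depend on the point, suffices.
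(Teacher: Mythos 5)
Your overall strategy is the paper's: differentiate the associativity identity $\Omega_x\circ L_{g_0}=\Omega_{xg_0}$, combine adaptedness of $P$ with hypothesis (2.2) at the product point, and finish with a continuity argument. But there is a genuine gap in the step where you conclude $g_0h\in{\rm int}\,G$ for $h\in{\rm int}\,G$ and arbitrary $g_0\in G$. Openness of the multiplication map $G\times G\to G$ only tells you that the image $VU$ is open in the subspace topology of $G$; since ${\rm int}\,G$ in Definition 2.1 means the interior relative to the ambient manifold $X$, a relatively open subset of $G$ may well contain boundary points, so membership of $g_0h$ in a relatively open subset of $G$ does not place it in ${\rm int}\,G$. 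The claim itself is false for general Lie monoids: take $G=[0,\infty)\subset\bR$ under ordinary multiplication (a regular closed set with smooth, associative, open multiplication and unit $1$), where $L_0$ sends all of ${\rm int}\,G=(0,\infty)$ to the boundary point $0$. Without $g_0h\in{\rm int}\,G$ you cannot invoke (2.2) to produce the $x$ with $S_{g_0h}=(\Omega_{x*})^{-1}P_{xg_0h}$, so your ``direct'' argument only covers those pairs for which $g_0h$ happens to lie in ${\rm int}\,G$.

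The paper's proof is organized precisely to avoid this: it first proves $L_{g*}S_h\subset S_{gh}$ under the three hypotheses $g,h,gh\in{\rm int}\,G$, and then, for arbitrary $g,h\in G$, uses regular closedness together with openness of multiplication to choose approximating sequences $g_\nu,h_\nu\in{\rm int}\,G$ converging to $g,h$ \emph{such that} $g_\nu h_\nu\in{\rm int}\,G$, and passes to the limit using continuity of $S$ and of left translation. Your closing paragraph only approximates in the variable $h$ (since you believe $g_0$ has already been handled in full generality) and does not arrange $g_\nu h_\nu\in{\rm int}\,G$ along the sequence, so it does not repair the gap. The fix is exactly the paper's two-variable approximation; the rest of your argument is correct.
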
 

Note that $P$ is adapted means the weaker condition $S_g\subset (\Omega_{x*})^{-1} P_{xg}$. 

\begin{proof}
Suppose first $g,h$, and $gh\in\text{int}\,G$, and choose $x\in N$ so that $S_{gh}=(\Omega_{x*})^{-1}P_{xgh}$. Denoting $L_g: G\to G$ left translation by $g$, we have $\Omega_xL_g=\Omega_{xg}$. Therefore
\[\Omega_{x*}(L_{g*}S_h)=(\Omega_{xg})_*S_h\subset P_{xgh},\quad{\rm{and}}\quad  L_{g*}S_h\subset\Omega_{x*}^{-1} P_{xgh}=S_{gh}.\]

Next take arbitrary $g,h\in G$. Since $G$ is regular closed and multiplication is an open map, there are 
$g_\nu,h_\nu\in\text{int}\,G  \, (\nu\in\bN)$, converging to $g, h$ such that $g_\nu h_\nu\in\text{int}\,G$. By what we have 
already proved,
$L_{g*}S_h=\lim\limits_{\nu} L_{g_{\nu}*}S_{h_\nu}\subset \lim\limits_{\nu} S_{g_{\nu}h_\nu}=S_{gh}$,
as claimed.
\end{proof}

Sometimes condition (2.2) can be checked without exactly knowing what $S$ and $P$ are. For example, if $S$ is a complex structure, $P$ is a CR structure $(S\oplus\oS=\bC TG, P\cap\oP=(0))$, and for every $g\in\text{int}\,G$ there is an $x\in N$ such that $\Omega_x$ is an immersion at $g$, then (2.2) follows. Indeed, if $\sigma'\in S_g$, $\sigma''\in\oS_g$, and $\sigma'+\sigma{''}\in \Omega_{x*}^{-1} P_{xg}$, then $\Omega_{x*}\sigma'\in P_{xg}$ implies $\Omega_{x*}\sigma{''}\in P_{xg}$; but also $\Omega_{x*}\sigma{''}\in \oP_{xg}$, so $\Omega_{x*}\sigma{''}=0$, $\sigma{''}=0$ and $\sigma'+\sigma{''}\in S_g$. In what follows, we will mostly consider left invariant involutive structures $S$ on $G$ and involutive structures adapted to actions of $(G,S)$.

\section{Existence} 
 
In this section under certain analyticity assumptions we will construct left invariant involutive structures on Lie monoids $G$ and adapted involutive structures on manifolds $N$ on which $G$ acts. We start with a general property of involutive bundles.

\begin{lem} 
Let $X, Y$ be smooth manifolds, $P\subset\bC TX$ and $Q\subset\bC TY$ smooth subbundles, and $\varphi:X\to Y$ a smooth map such that $P_x=\varphi^{-1}_* Q_{\varphi(x)}$ for $x\in X$. If $Q$ is involutive, then so is $P$. 
\end{lem}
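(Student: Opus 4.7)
The plan is to characterize involutivity via annihilating $1$-forms and then transfer the condition from $Q$ to $P$ through pullback, exploiting the naturality of the exterior derivative. Recall that for any smooth subbundle $E\subset\bC TZ$, Cartan's formula
\[
d\alpha(\xi_1,\xi_2)=\xi_1\alpha(\xi_2)-\xi_2\alpha(\xi_1)-\alpha([\xi_1,\xi_2])
\]
shows that involutivity of $E$ is equivalent to $d\alpha(\xi_1,\xi_2)=0$ for every local $\bC$-valued $1$-form $\alpha$ with $\alpha|_E=0$ and every pair of local sections $\xi_1,\xi_2$ of $E$, since then the first two terms on the right vanish.

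Working locally near an arbitrary $y_0\in Y$, I would choose, via a local trivialization of the quotient $\bC TY/Q$, smooth $\bC$-valued $1$-forms $\beta_j$ whose joint kernel at each nearby $y$ is precisely $Q_y$. The hypothesis $P_x=\varphi^{-1}_*Q_{\varphi(x)}$ then gives $v\in P_x\Leftrightarrow (\varphi^*\beta_j)(v)=\beta_j(\varphi_*v)=0$ for all $j$, so the pullbacks $\varphi^*\beta_j$ jointly annihilate exactly $P$ near the preimage of $y_0$.

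For arbitrary local sections $\xi_1,\xi_2$ of $P$, Cartan's formula applied to $\alpha=\varphi^*\beta_j$, together with the naturality relation $d(\varphi^*\beta_j)=\varphi^*d\beta_j$, yields at every point $x$
\[
(\varphi^*\beta_j)([\xi_1,\xi_2])(x)=-d\beta_j\bigl(\varphi_*\xi_1(x),\,\varphi_*\xi_2(x)\bigr).
\]
Since $\varphi_*\xi_i(x)\in Q_{\varphi(x)}$ by the hypothesis on $P$, and $d\beta_j$ vanishes pointwise on $Q_y\times Q_y$ by involutivity of $Q$ (extending two vectors in $Q_y$ to local sections of $Q$ and using the tensorial character of the $2$-form $d\beta_j$ to pass from the section-level statement of the first paragraph to a pointwise one), the right-hand side is zero. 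Hence $(\varphi^*\beta_j)([\xi_1,\xi_2])\equiv 0$ for every $j$, which by the second paragraph forces $[\xi_1,\xi_2]$ to be a section of $P$, so $P$ is involutive.

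The argument is mostly bookkeeping about pullbacks; the only place requiring even minor care is the tensorial reduction that turns section-level vanishing of $d\beta_j|_Q$ into the pointwise vanishing needed on $\bigl(\varphi_*\xi_1(x),\varphi_*\xi_2(x)\bigr)$, and this is standard. (In the Banach setting one uses that $Q$, being a Banach subbundle, has locally a closed complement, so the $\beta_j$'s can still be produced; the rest of the argument is insensitive to finite-dimensionality.)
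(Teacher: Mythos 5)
Your argument is correct, and it reaches the conclusion by a more direct route than the paper. The paper establishes the same $1$-form criterion (a subbundle is involutive iff locally it is the joint kernel of $1$-forms $\alpha$ with $d\alpha$ vanishing on the subbundle), but then factors $\varphi$ as the graph embedding $X\to X\times Y$ followed by the projection $X\times Y\to Y$, treating the two special cases separately: for a direct embedding it extends sections of $P$ to sections of $Q$ and restricts the bracket, and only for the projection does it pull back annihilating forms. You instead pull back the annihilating forms $\beta_j$ of $Q$ along an arbitrary $\varphi$ in one step, using $d(\varphi^*\beta_j)=\varphi^*d\beta_j$ together with the tensoriality of $d\beta_j$ to convert the section-level vanishing of $d\beta_j$ on $Q$ into the pointwise vanishing needed at $\bigl(\varphi_*\xi_1(x),\varphi_*\xi_2(x)\bigr)$; the hypothesis $P_x=\varphi_*^{-1}Q_{\varphi(x)}$ is used twice, once to see that $\varphi_*$ carries $P$ into $Q$ and once to see that the $\varphi^*\beta_j$ cut out exactly $P$. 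This buys a shorter, case-free proof; what the paper's factorization buys is essentially nothing beyond avoiding the (standard) tensorial reduction, at the cost of introducing the auxiliary bundle $R$ on $X\times Y$. Your parenthetical remark on the Banach case is the right one: both arguments need the subbundles to be complemented so that enough annihilating $1$-forms exist locally (via Hahn--Banach in a local trivialization), and every vector of $Q_y$ must extend to a local section, which local triviality provides.
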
 

\begin{proof}
A bundle, say $P\subset\bC TX$, is involutive if and only if for all $x\in X$ there is a collection $A$ of smooth  1--forms defined in a neighborhood $U$ of $x$ such that
\begin{equation}  
P|U=\bigcap \{{\rm{Ker}}\, \alpha : \alpha\in A\}, \quad\text{and}\quad d\alpha|P_x=0 \quad\text{for } \alpha\in A.
\end{equation}
 This follows from the formula  $d\alpha(\xi,\eta)=\xi\alpha(\eta)-\eta\alpha(\xi)-\alpha[\xi,\eta]$ for vector fields $\xi, \eta$.

Now suppose first that $\varphi$ is a direct embedding of a submanifold $X\subset Y$. Then $P=Q\cap\bC TX$, and smooth sections $\xi, \eta$ of $P$ can be extended, locally, to smooth sections $\txi, \teta$ of $Q$. Hence $[\xi, \eta]=[\txi, \teta] | X$ takes values both in $Q$ and in $\bC TX$, i.e. in $P$.

Second, suppose $X=Y\times Z$ and $\varphi$ is projection on $Y$. We check the above criterion for involutivity. Choose a collection $B$ of smooth 1--forms in a neighborhood $V$ of some $y\in Y$ such that
\[Q|V=\bigcap \{{\rm Ker} \beta : \beta\in B\} \quad{\rm{and}}\quad d\beta|Q_y=0\quad {\rm{for}}\quad \beta\in B.\]
Then the pullbacks $A=\{\varphi^*\beta :\beta\in B\}$ satisfy (3.1) and so $P$ is indeed involutive.

Finally, a general $\varphi$ can be factored
\[X\xrightarrow{{\rm{id}}_X \times\varphi} X\times Y \xrightarrow{{\rm{pr}}_{\rm{Y}}} Y.\]
With the subbundle $R\subset \bC T(X\times Y)$, 
\[R_{(x,y)}=\bC T_x X\oplus Q_y\subset \bC T_x X\oplus \bC T_y Y\simeq \bC T_{(x,y)} (X\times Y),\]
we can apply the two special cases to conclude $R$ and $P$ are involutive.
\end{proof}
  
 Suppose $C$ is a complex manifold and a smooth map
 \[G\times C\ni (g, c)\mapsto gc\in C\]
 defines a left action by holomorphic maps $C\to C$. Fix $c\in C$ and let
 \begin{equation}
 \varphi=\varphi_c : G\ni h\mapsto hc\in C.
 \end{equation}
Assuming that $\varphi_*^{-1} T^{10} C\subset \bC TG$ has constant rank, it defines an involutive subbundle $S=S(c)$ by Lemma 3.1. In fact $S(c)$ is left invariant. Indeed, suppose $h\in G$ and $\sigma\in S_h$. With $L_g:G\to G$ denoting  left translation by $g\in G$ we have $\varphi L_g=g\varphi$ and so $\varphi_*(L_{g*}\sigma)=(\varphi L_g)_*\sigma$ is the image of $\varphi_*\sigma\in T^{10}_{hc} C$ under the action of $g$. Since $G$ acts by holomorphic maps, $\varphi_*(L_{g*}\sigma)\in T^{10}_{ghc} C$ or $L_{g*} \sigma\in S_{gh}$ follows.

In the examples below $\varphi^{-1}_* T^{10}C$ will have constant rank for the following general reason. $G$ will be a regular closed subset and a submonoid of a Lie group $\Gamma$, and the action of $G$ on $C$ extends to a smooth action
\[\Gamma\times C\ni(g,c)\mapsto gc\in C\]
of $\Gamma$ by holomorphic maps. The map $\varphi_c(h)=\varphi(h)=hc$ is now defined for all $h\in\Gamma$, and the calculations we have just made show that $L_{g*}$ maps $\varphi_*^{-1}T^{10} C$ into itself for $g\in\Gamma$. Since the inverse of a left translation of $\Gamma$ is itself a left translation, left translations act as isomorphisms between the fibers of $\varphi^{-1}_*T^{10}C$, which therefore have the same dimension.

\noindent
\begin{example}
 Consider the group $\Gamma$ of affine transformations $t\mapsto a+bt$ $(a\in\bR, b\in (0,\infty))$ of $\bR$ and, given $r\in(0,\infty)$, the submonoid $G$ of those $g\in\Gamma$ that map $(-r,r)$ into itself. We take $C$ to be the complex plane, on which $\Gamma$ and $G$ still act by affine transformations extending the action on $\bR$. The involutive structures $S(c)$ for $c\in\bC\setminus \bR$ are complex structures and for $c\in\bR$ one dimensional foliations. 
If $a,b$ are used as coordinates to embed $G$ (or $\Gamma$) into $\bR^2$, the leaves of the foliation are line segments of slope $1/c$, while the complex structures are obtained by identifying $(a,b)\in \bR^2$ with $a+bc\in \bC$. For any $c\in\bC$, $S(c)$ is spanned by the vector $\oc\partial_a-\partial_b$. The $S(c)$ represent all left invariant involutive structures on $\Gamma$ of rank 1 but for one foliation, whose leaves are parallel with the $a$--axis. In \cite{LSz2} we have already worked with them in connection with geodesics of a Riemannian manifold.
\end{example}

\noindent
\begin{example}
Consider the Lie group $\rm{PSL}_2(\bC)$ acting by fractional linear transformations $g(\zeta)=(\alpha\xi +\beta)/ (\gamma\zeta+\delta)$ on the Riemann sphere $\bC\bP_1=\bC \cup\{\infty\}$. Let $G\subset \rm{PSL}_2(\bC)$ be regular closed and a submonoid. In Example 4.9 we construct such $G$ from intersecting bounded convex domains $D, E\subset\bC$; $G$ consists of those $g\in\rm{PSL}_2(\bC)$ that map both $D$ and $E$ into themselves. This $G$ is regular closed because $g$ that map $\oD$ into $D$ and $\oE$ into $E$ are in int$\,G$, and form a dense set in $G$. Choose integers $k, l\ge 0$ and let $G$ act diagonally  on $C=\bC\bP_1^k\times(\bC\bP_1^*)^l$ (here $\bC\bP_1^*$ is $\bC\bP_1$ with the opposite complex structure):
\[g(\zeta_1,\dots,\zeta_{k+l})=(g\zeta_1,\dots, g\zeta_{k+l}).\]
Depending on the choice of $k, l$ and $c\in C$ we obtain various left invariant involutive structures $S(c)$ on $G$ and in fact on PSL$_2(\bC)$. If $l=0$, for any $c$ the map $\varphi_c:\rm{PSL}_2(\bC)\to C$ is holomorphic when the source is given its standard complex structure, so that $S(c)\supset T^{10}\rm{PSL}_2(\bC)$. When among the coordinates $\zeta_1,\dots,\zeta_k\in\bC\bP_1$ of $c$ there are three different, $\varphi_c$ is a holomorphic embedding and $S(c)=T^{10}\psl_2(\bC)$.

In general, if among $\zeta_1\dots,\zeta_k, \overline{\zeta_{k+1}},\dots, \overline{\zeta_{k+l}}$ there are three different points and $c=(\zeta_1,\dots, \zeta_{k+l})$, then $\varphi=\varphi_c$ is a smooth embedding. This is  so because given three distinct points $\zeta, \omega,\tau\in \bC\bP_1$, associating with $g\in \psl_2(\bC)$ the triple $(g\zeta,g\omega, g\tau)$ sets up a (holomorphic) diffeomorphism between $\psl_2(\bC)$ and $(\bC\bP_1)^3$ minus the big diagonal. The image of $\varphi$ will be a CR manifold and $S(c)$ a CR structure, possibly trivial, $S(c)=(0)$.
However, if $\dim_{\bC}C=k+l<6=\dim_{\bR}\psl_2(\bC)$, the CR structure will not be trivial.
\end{example}

Let us return to a general manifold $N$ on which a Lie monoid $G$ acts on the right. We assume a left action of $G$ on a complex manifold $C$ as above. Further we assume that we are given another complex manifold $Z$ and a smooth map $\var:N\times C\to Z$ such that $\var^x=\var(x,\cdot)$ is holomorphic for all $x\in N$ and 
\begin{equation}  
\var(x,gc)=\var(xg,c),\quad x\in N, g\in G, c\in C.
\end{equation}  

\begin{thm} 
Let $c\in C$ and $\psi=\psi_c=\var(\cdot, c): N\to Z$. 
If $P=P(c)=\psi_*^{-1}T^{10}Z\subset \bC TN$ is a subbundle, then it is an involutive structure adapted to the action of $(G, S(c))$. 
\end{thm}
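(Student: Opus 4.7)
\medskip

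\noindent\textbf{Proof proposal.} The plan is to break the conclusion into the two assertions separately: first that $P(c)$ is involutive, and then that it is adapted to the action of $(G,S(c))$. Involutivity will follow immediately from Lemma 3.1 applied to the smooth map $\psi=\psi_c:N\to Z$ together with the subbundle $T^{10}Z\subset\bC TZ$, which is involutive because it is the $(1,0)$ bundle of the complex structure on $Z$. The hypothesis that $P(c)=\psi_*^{-1}T^{10}Z$ is a subbundle is exactly what is needed to invoke Lemma 3.1.

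For adaptedness, fix $x\in N$ and $g\in\mathrm{int}\,G$, and let $\sigma\in S(c)_g$. I must show $(\Omega_x)_*\sigma\in P(c)_{xg}$, i.e.\ $\psi_*(\Omega_x)_*\sigma\in T^{10}_{\psi(xg)}Z$. The key observation is the identity
\begin{equation*}
\psi\circ\Omega_x \;=\; \var^x\circ\varphi_c,
\end{equation*}
which I would verify by direct computation: for $h\in G$,
\begin{equation*}
\psi(\Omega_x h)=\var(xh,c)=\var(x,hc)=\var^x(hc)=\var^x(\varphi_c(h)),
\end{equation*}
where the second equality is the covariance property (3.3). Differentiating gives $\psi_*(\Omega_x)_*=(\var^x)_*\varphi_{c*}$.

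Now the definition of $S(c)_g=\varphi_{c*}^{-1}T^{10}_{gc}C$ says $\varphi_{c*}\sigma\in T^{10}_{gc}C$. Since $\var^x:C\to Z$ is holomorphic by hypothesis, its differential carries $T^{10}C$ into $T^{10}Z$; hence $(\var^x)_*\varphi_{c*}\sigma\in T^{10}_{\var^x(gc)}Z=T^{10}_{\psi(xg)}Z$. Combining with the previous displayed identity yields $\psi_*(\Omega_x)_*\sigma\in T^{10}Z$, which is what adaptedness demands.

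There is no real obstacle here: the proof is essentially a chase around the commutative triangle built from the intertwining property (3.3), exploiting that $\var^x$ is holomorphic and that the fibers of $P$ and $S$ are by definition the preimages of $T^{10}$ under $\psi$ and $\varphi_c$ respectively. The only point requiring a moment's care is that the argument for adaptedness takes place over $\mathrm{int}\,G$, where $\Omega_x$ and $\varphi_c$ are smooth and where $S(c)$ is defined in the manner of the paragraph preceding Example 3.2.
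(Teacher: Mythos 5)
Your proposal is correct and follows essentially the same route as the paper: involutivity via Lemma 3.1, then adaptedness by rewriting (3.3) as $\psi\circ\Omega_x=\var^x\circ\varphi_c$ and using holomorphy of $\var^x$ to push $\varphi_{c*}\sigma\in T^{10}C$ into $T^{10}Z$. No gaps.
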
  

\begin{proof}
By Lemma 3.1 $P$ is involutive, so all we need  to show is it is adapted. Suppose $x\in N$ and $\sigma\in S(c)$, i.e., 
$\varphi_*\sigma\in T^{10}C$. We rewrite (3.3) as $\var^x\circ\varphi=\psi\circ\Omega_x$, whence $\psi_*{\Omega_x}_*\sigma=
\var^x_*\varphi_*\sigma\in T^{10} Z$, since $\var^x$ is holomorphic. But this means ${\Omega_x}_*\sigma\in P$, and $P$ is indeed adapted.
\end{proof}

\section{Examples}

We illustrate Theorem 3.4 on three examples. They all involve real analytic objects. In what follows we will drop the qualifier `real' of `analytic'.

\noindent
\begin{example}
Consider an analytic manifold $M$ with an analytic connection $\nabla$ on $TM$. There is a complex manifold $M^\bC$ of which $M$ is a maximally real,
analytic submanifold, in the sense that locally in source and target, the pair $(M, M^\bC)$ is biholomorphic to a real Banach space and its 
complexification. When $\dim M<\infty$ this is due to Bruhat and Whitney \cite{WB}, whose construction Patyi and Simon \cite{PS} extended to Banach 
manifolds. Upon shrinking $M^\bC$ we can arrange that $\nabla$ extends to a holomorphic connection $\nabla^\bC$ on $M^\bC$.

A geodesic of $\nabla$ is a smooth map $x:[p,q]\to M$ of some interval $[p,q]\subset\bR$ whose velocity vector $\dot x$ is parallel. In local charts
geodesics satisfy a second order analytic ODE and this implies that they extend to a holomorphic map of a neighborhood of $[p,q]\subset\bC$ into $M^\bC$

Fix an $r\in (0,\infty)$, and as in Example 3.2, let $G$ consist of strictly increasing affine transformations of $\bR$ that leave $(-r,r)$ 
invariant.
This monoid acts on $\bC$ as well; choose a simply connected bounded open neighborhood $C$ of $0\in\bC$  that $G$ leaves invariant. For example, 
$C$ could be a disc $\{\zeta\in\bC: |\zeta|<R\}$ of radius $R\ge r$. Let $N$ consist of those geodesics $x:[-r,r]\to M$ that extend to a holomorphic 
map $\tx$ of a neighborhood of $\oC$ into $M^\bC$. Associating with $x\in N$ its velicity $\dot x(0)\in TM$ embeds $N$ as an open subset into $TM$ 
and endows $N$ with the structure of an analytic Banach manifold. For example, all zero speed geodesics are in $N$. The formula
\[N\times G\ni (x,g)\mapsto xg=x\circ g\in N, \]
defines the action of $G$ on $N$. Let $Z=M^\bC$, 
\[\var:N\times C\ni(x,c)\mapsto\tx(c)\in M^\bC,\qquad{\rm{and}}\qquad \psi=\psi_c=\var(\cdot, c).\]
We are in the situation of Theorem 3.4: provided $P=\psi^{-1}_*T^{10}Z$ is a Banach bundle, it will be an involutive structure on $N$ adapted to the action of $(G, S(c))$, $c\in C$. In fact, $P$ obtained in this example is always a Banach bundle, as we presently verify in the following, more general example.
\end{example}

\noindent
\begin{example}
While Example 4.1 was about trajectories of certain second order ODEs, here we will consider trajectories of rather general first order ODEs 
(vector fields). In this generality there is not much action on the space of trajectories, and we will work with $G=\{e\}$. Although most
results of this paper become vacuous when $G$ is trivial, Theorem 3.4 still has content, at least the part about involutivity. The point is
that in this example, without any meaningful action, it is still possible to show that $P(c)\subset\bC TN$ is a subbundle.

The set up is as follows. We consider an analytic direct submersion of analytic manifolds $\pi: X\to M$. Our manifold $N$ will consist of certain 
trajectories of a fixed real analytic vector field $\xi$ on $X$. 

A subclass of examples to keep in mind is when $\pi$ is the bundle projection $X=TM\to M$ and 
\begin{equation} 
\pi_*\xi(x)=x\quad{\rm{for }} \quad x\in TM.
\end{equation}   
If $M$ is an open subset of a Banach space $E$, then $TM=M\times E$, and (4.1) means
\begin{equation}   
\xi(u,v)=(v,F(u,v)), \quad u\in M, v\in E,
\end{equation}   
with some analytic $F:TM\to E$. In (4.2) we viewed the vector field $\xi$ as a map $TM\to E\times E$, without indicating the footpoint $(u,v)$ of the vector $\xi(u,v)$. It follows that trajectories $x:[p,q]\to TM$ of $\xi$ project to $M$ as solutions of the second order ODE
\begin{equation}   
\ddot u=F(u,\dot u).
\end{equation}    
Even with a general manifold $M$, trajectories of $\xi$ on $TM$ are in one-to-one correspondence with trajectories of an $M$ valued second order ODE. In this way Example 4.2 generalizes Example 4.1.

As before we can complexify our set up and find a direct holomorphic submersion $\pi^\bC:X^\bC\to M^\bC$ of complex manifolds and a holomorphic vector field $\xi^\bC$ on $X^\bC$ (i.e., a section of $T^{10}X$) so that $X, M$ are maximally real, analytic submanifolds of $X^\bC, M^\bC$, $\pi^\bC|X=\pi$, and $2{\rm{Re}}\, \xi^\bC|X=\xi$.

Fix next a connected, simply connected, bounded open neighborhood $C$ of $0\in \bC$, and let $N$ consist of trajectories $x:(-\var,\var)\to X$ of $\xi$ ($\var>0$ is not fixed) that extend to holomorphic maps $\tx$ of a neighborhood of $\bar C$ into $X^\bC$. Of course, the extension $\tx$ is a trajectory of $\xi^\bC$. Associating with $x\in N$ its initial value $x(0)\in X$ realizes $N$ as an open subset of $X$, and endows $N$ with the structure of a Banach manifold. It is possible that $N$ turns out to be empty. However, given any trajectory $x:(-\var,\var)\to X$ of $\xi$, if $C\subset \bC$ is chosen a sufficiently small neighborhood of $0$, $N$ will contain  $x$.
\end{example}

As in Example 4.1 we fix $c\in C$ and with $x\in N$ let $\psi(x)=\psi_c(x)=\pi^\bC(\tx(c))\in M^\bC$.

\begin{prop} 
$P=P(c)=\psi^{-1}_*T^{10}M^\bC\subset \bC TN$ is a complemented subbundle.
\end{prop}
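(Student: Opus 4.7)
The plan is to factor $\psi_c$ through the complex flow of the vector field $\xi^{\bC}$ and to exploit the fact that the maximally real inclusion $X\subset X^{\bC}$ identifies $\bC TN$ with the restriction of $T^{01}X^{\bC}$. Being complemented is a local property on $N$, so I would work near an arbitrary $x_0\in N$. Let $\Psi_c\colon U\to X^{\bC}$ denote the local complex-time-$c$ flow of $\xi^{\bC}$, defined on some neighborhood $U\subset X^{\bC}$ of $x_0(0)$; the hypothesis that $\tx_0$ extends holomorphically past $\overline C$ together with standard ODE theory guarantees such a $U$. Identifying $N$ with an open subset of $X$ via $x\mapsto x(0)$, the map $\Phi_c\colon N\to X^{\bC}$ sending $x$ to $\tx(c)$ coincides with $\Psi_c$ restricted to $N\cap U$ (by uniqueness of analytic continuation of the trajectory), and $\psi_c=\pi^{\bC}\circ\Phi_c$ on $N\cap U$.

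I would then assemble three bundle maps over $N\cap U$. First, the $(0,1)$-projection $p\colon \bC TN\to T^{01}X^{\bC}|_N$, obtained by including $\bC TN=\bC TX|_N$ into $\bC TX^{\bC}|_N=T^{10}X^{\bC}|_N\oplus T^{01}X^{\bC}|_N$ and projecting on the second summand. Because $X$ is maximally real ($TX\cap J\,TX=(0)$ fiberwise), one checks that $\bC TX|_N\cap T^{10}X^{\bC}|_N=(0)$; since both of these sub-bundles of $\bC TX^{\bC}|_N$ have the same complex rank, $p$ is a $\bC$-linear bundle isomorphism. Second, $\Psi_c$ is biholomorphic, so its derivative $\Psi_{c*}$ preserves the $(1,0)/(0,1)$ decomposition, and its $(0,1)$-part $\Psi_{c*}^{01}\colon T^{01}X^{\bC}|_{N\cap U}\to T^{01}X^{\bC}|_{\Phi_c(N\cap U)}$ is a bundle isomorphism. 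Third, because $\pi^{\bC}$ is a holomorphic direct submersion, $K:=\Ker(\pi^{\bC})_*|_{T^{01}X^{\bC}}$ is a complemented holomorphic subbundle of $T^{01}X^{\bC}$ (a complement is given by the conjugate of any complement of the kernel on $T^{10}X^{\bC}$, which exists by direct submersivity).

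A short calculation then identifies
\[
P(c)\;=\;p^{-1}\bigl((\Psi_{c*}^{01})^{-1}(K|_{\Phi_c(N)})\bigr).
\]
Indeed, $\psi_{c*}v\in T^{10}M^{\bC}$ iff its $T^{01}M^{\bC}$-component vanishes; since $(\pi^{\bC})_*$ is $\bC$-linear and so commutes with projection onto $T^{01}$, this happens iff the $T^{01}$-component of $\Phi_{c*}v$ lies in $K$; and the identity $p\circ\Phi_{c*}=\Psi_{c*}^{01}\circ p$, valid because $\Psi_c$ is holomorphic, rewrites this as a condition on $p(v)\in T^{01}X^{\bC}|_N$. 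Because $p$ and $\Psi_{c*}^{01}$ are bundle isomorphisms, pulling the complemented subbundle $K$ back through them yields a complemented subbundle of $\bC TN$, and this subbundle is exactly $P(c)$.

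The main obstacle is largely bookkeeping: rigorously setting up the identification $\bC TN\cong T^{01}X^{\bC}|_N$ induced by the maximally real embedding, and verifying in the Banach-manifold framework of \cite{La} that the complex flow of $\xi^{\bC}$ exists on a uniform neighborhood of each $x(0)$ and that its derivative genuinely respects the $T^{10}/T^{01}$ split. Once these foundations are in place the display above is essentially the whole argument, and the result immediately specializes to Example 4.1, where $\pi\colon TM\to M$ is the bundle projection.
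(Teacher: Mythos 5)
Your argument is correct and is essentially the paper's own proof in different notation: your complex-time-$c$ flow $\Psi_c$ is the paper's map $H:N^\bC\to X^\bC$, your $(0,1)$-projection $p$ is the paper's real isomorphism $l:\bC TN\to TN^\bC|N$ composed with the canonical identification $TX^\bC\cong T^{01}X^\bC$, and pulling $K=\Ker\,(\pi^\bC)_*|T^{01}X^\bC$ back through $\Psi^{01}_{c*}\circ p$ amounts to writing $P=l^{-1}\Ker\,\psi^\bC_*$ for the direct holomorphic submersion $\psi^\bC=\pi^\bC\circ H$. No gaps.
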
   

\begin{proof} 
 Let $N^\bC$ stand for all holomorphic trajectories $y$ of $\xi^\bC$, defined in some neighborhood of $\oC$. Thus 
 $N=\{y\in N^\bC: y(0)\in X\}$. As with $N$, the map $y\mapsto y(0)$ identifies $N^\bC$ with an open subset of $X^\bC$, and endows it with the
structure of a complex manifold in which $N$ is a maximally real, analytic submanifold. The fundamental theorem of ODEs implies that the map
\[H:N^\bC\ni y\mapsto y(c)\in X^\bC\]
is a biholomorphism on its image, an  open subset of $X^\bC$. It follows that
\[\psi^\bC=\pi^\bC\circ H: N^\bC\to M^\bC\]
is a direct holomorphic submersion and $\psi^\bC | N=\psi$. Let $J_M,J_N$ denote the complex structure tensors on $M^\bC,N^\bC$. 
There is an $\bR$--isomorphism $l:\bC TN\to TN^\bC|N$ given by
 \[l(v)=v,\qquad l(iv)=-J_Nv,\qquad\text{for}\quad v\in TN\subset\bC TN.\] 
As $\psi^\bC$ is holomorphic, for $v,w\in T_yN$
$$
\psi_*^\bC(lv)=\psi_*v,\qquad -\psi_*^\bC l(iw)=\psi_*^\bC(J_Nw)=J_M\psi_*^\bC w=J_M\psi_*w.
$$
Therefore $\psi_*(v+iw)\in T^{10}M^\bC$, or $J_M\psi_*w=\psi_*v$, if and only if $\psi_*^\bC l(v+iw)=0$. Since $\psi^\bC$ is a direct 
submersion,
\[P=\psi_*^{-1}T^{10}M^\bC=l^{-1}\Ker \,\psi_*^\bC|(TN^\bC|N)\]
is indeed a complemented subbundle of $l^{-1}(TN^\bC|N)=\bC TN$.

\end{proof}

Thus a combination of Theorem 3.4 and Proposition 4.3 constructs involutive structures $P(c)$ on spaces $N$ of trajectories of the vector field $\xi$. The nature of $P(c)$ depends on $c$ and on the interaction between $\xi$ and $\pi$ in a complicated way. In general we only have a perturbative result:

\begin{thm} 
Suppose $\pi_*\circ\xi: X\to TM$ is a local diffeomorphism and $x\in N$. If $c\in C\setminus\bR$ is sufficiently close to $0$, then $P(c)$ is 
a complex structure: $P(c)\oplus\overline{P(c)}=\bC TN$, over a neighborhood of $x$.
\end{thm}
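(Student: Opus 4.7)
The plan is to show that for $c$ non-real and sufficiently close to $0$, the real derivative $(\psi_c)_*\colon T_pN\to T_{\psi_c(p)}M^\bC$ is a toplinear isomorphism at $p=x(0)$, and hence on a neighborhood of $x$. If so, its complex-linear extension $\bC TN\to\bC TM^\bC$ is also an isomorphism, and pulling back the decomposition $\bC TM^\bC=T^{10}M^\bC\oplus T^{01}M^\bC$ yields $\bC TN=P(c)\oplus(\psi_c)_*^{-1}T^{01}M^\bC=P(c)\oplus\overline{P(c)}$, the last equality because $(\psi_c)_*$ commutes with conjugation. So everything reduces to invertibility of the real derivative.

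The computation is in local coordinates. I trivialize the direct submersion $\pi$ as $X\simeq M\times V$ with $\pi$ first projection, and split $\xi=(\xi_1,\xi_2)$. The hypothesis that $\pi_*\circ\xi\colon X\to TM$ is a local diffeomorphism is equivalent (after this trivialization) to $\partial\xi_1/\partial v\colon V\to F$ being a toplinear isomorphism, where $F$ models $M$. Using the holomorphic flow $G(c,p)=\tx_p(c)$ I have $\psi_c(u,v)=\pi^\bC(G(c,(u,v)))=u+c\,\xi_1(u,v)+O(c^2)$, taking values in $F^\bC=F\oplus iF$. Since $\psi_c$ is real-valued for $c\in\bR$, its imaginary part (in $F^\bC$) vanishes at $\beta=\mathrm{Im}\,c=0$, so it factors as $\beta\cdot h(\alpha,\beta,u,v)$ with $h$ real-analytic and $h(0,0,u,v)=\xi_1(u,v)$. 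Writing $c=\alpha+i\beta$ and differentiating in $(u,v)$, the real derivative $(\psi_c)_*\colon F\oplus V\to F\oplus F$ factors as
\[
(\psi_c)_*=\begin{pmatrix} I & 0\\ 0 & \beta\, I\end{pmatrix}\widetilde M(c),\qquad \widetilde M(0)=\begin{pmatrix} I & 0\\ \partial_u\xi_1 & \partial_v\xi_1\end{pmatrix}.
\]
Since $\widetilde M(0)$ is block-lower-triangular with invertible diagonal blocks ($I$ and $\partial_v\xi_1$, the latter by hypothesis), it is a toplinear isomorphism, and $\widetilde M(c)$ remains one for $|c|$ small by continuity; the diagonal factor is invertible iff $\beta\neq0$. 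Therefore $(\psi_c)_*$ is a toplinear isomorphism for $c$ non-real close to $0$, and the conclusion extends to a neighborhood of $x$ by continuous dependence on the footpoint.

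The main obstacle is the bookkeeping between two separate uses of $\sqrt{-1}$: the scalar $i$ that enters by complexifying $TN$, and the internal complex structure of $M^\bC$ that organizes $\bC TM^\bC$ into $T^{10}\oplus T^{01}$. Once these are kept apart, the analysis is straightforward. The factorization $\mathrm{Im}\,\psi_c=\beta\cdot h$ is what lets the argument work uniformly for $c$ non-real close to $0$—even for $c$ close to the real axis, where a naive $O(c^2)$ error would be dangerous—and it is a direct consequence of $\psi_c$ being real-valued on $c\in\bR$. The finite- versus infinite-dimensional distinction is inessential because only the toplinear invertibility of $\partial_v\xi_1$ is used.
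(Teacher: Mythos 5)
Your proof is correct and takes essentially the same route as the paper: your diagonal factor $\mathrm{diag}(I,\beta I)$ is the inverse of the paper's rescaling operator $L$, and your limiting operator $\widetilde M(0)$ is precisely the derivative of the paper's limiting local diffeomorphism $y\mapsto \pi(y(0))+ip\alpha(y(0))$, whose invertibility encodes the hypothesis on $\pi_*\circ\xi$ in the same way. The Hadamard-type factorization $\mathrm{Im}\,\psi_c=\beta\,h$ is just a tidier packaging of the paper's claim that the $O(\mathrm{Im}\,s)^2$ error is uniform in $C^1$ norm.
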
 

The assumption concerning $\pi_*\circ\xi$ is met if $X=TM$ and $\xi$ comes from a second order autonomous ODE valued in $M$, see (4.1), (4.2). 
Conversely, if $\pi_*\circ\xi$ is a global diffeomorphism $X\to TM$, then the pushforward of $\xi$ by this differomorphism is a vector field 
$\xi^1$ on $TM$ that satisfies (4.1), and so its trajectories are lifts of solutions of a second order ODE of type (4.3).

\begin{proof}
A calculation in charts will show that for $c\in C\setminus\bR$ close to 0 the map $\psi=\psi_c$ is a diffeomorphism of a neighborhood of 
$x\in N$ on an open subset of $M^\bC$. We can assume that a neighborhood of $x(0)\in X$ is an open subset $U$ of a Banach space $E$, a 
neighborhood of $\pi(x(0))\in M$ is an open subset $V$ of a Banach space $F$, and $\pi|U$ is the restriction of a real linear map $p:E\to F$. 
This means that in $X^\bC$, $M^\bC$, neighborhoods of $x(0)$, $\pi(x(0))$ can be taken open subsets of $\bC\otimes E$, $\bC\otimes F$. Over 
$U$ and $V$ the tangent bundles $TX, TM$ are $U\times E$, $V\times F$, and in this representation
\[\text{if } \xi(u)=(u,\alpha(u))\in U\times E,  \text{ then } (\pi_*\circ\xi)(u)=(\pi u, p\alpha(u))\in V\times F.\]
Since $\pi_*\circ \xi$ is a local diffeomorphism, the restrictions of $p\alpha: U\to F$ to the fibers of $\pi$ are local diffeomorphisms.

By Taylor's formula, for $y\in N$ in a neighborhood of $x$
\[\ty(s)=y(\text{Re}\,s)+i(\text{Im}\,s)\alpha(y(\text{Re}\,s))+O(\text{Im}\,s)^2, \text{ and }\]
\begin{equation}  
\pi^\bC(\ty(s))=\pi(y(\text{Re}\,s))+i(\text{Im}\,s)p\alpha(y(\text{Re}\,s))+O(\text{Im}\,s)^2,
\end{equation} 
as $\bC\ni s\to 0$. The error term is uniform for $y$ close to $x$ and $s$ close to $0$; in fact, viewing both sides as functions of $y\in N$, 
the error in $C^1$ (or any $C^k$) norm is of order $(\text{Im}\,s)^2$. When $\text{Im}\,s\ne 0$, $\psi$ is a diffeomorphism near $x$ if its 
composition with
\[L:\bC\otimes F\ni v\mapsto \text{Re}\,v+{\rm{Im}}\, v/{\rm{Im}} s\in \bC\otimes F\]
is. By (4.4)
\[L\pi^\bC(\ty(s))=\pi(y(\text{Re}\,s))+ip\alpha(y(\text{Re}\,s)) +O(\text{Im}\,s).\]
When $\text{Re}\,s=0$, the first term on the right, $y\mapsto \pi(y(0))$, defines a direct submersion $N\to F$ near $x$ and, as said, the second term, restricted to the fibers of this submersion, is a local diffeomorphism to $iF$. Hence the first two terms represent near $x$ a local diffeomorphism $N\to \bC\otimes F$. It follows that when $c\in C\setminus \bR$ is sufficiently close to 0, $y\mapsto L\pi^\bC(\ty(c))=L\psi(y)$ is a diffeomorphism near $x$, and so is $\psi$.

Therefore the pullback of $T^{10}M^\bC$ by $\psi$ is a complex structure on $N$ near $x$ (and with this structure $N$ is locally biholomorphic to open sets in complex Banach spaces).

\end{proof}

The construction of adapted complex structures in \cite{Bi,HK2, Sz1, Sz4} can be obtained from Example 4.2. Those structures correspond to the 
choice $c=i$, whereas the involutive structures above are complex only if $c\in \bC\setminus \bR$ is sufficiently small. This difference can be 
overcome by a certain scaling. The second order $M$ valued ODEs in \cite{Bi, HK2, Sz1,Sz4} have constant maps as solutions. By rescaling, using 
Example 4.2 one obtains a result of the following nature: Any $x\in N$ representing a constant map $[-r,r]\to M$ has a neighborhood on which 
the involutive structure $P(i)$ is a complex structure.

The adapted complex structures of a Riemannian manifold $M$, and the magnetic complex structure of Hall and Kirwin \cite{HK2} when $M$ is 
endowed with a closed 2-form, are even K\"ahler. This generalizes to the framework of Example 4.2.

\begin{thm} 
In the setting of Theorem 4.4, view $N$ as an open subset of $X$. Suppose $\omega$ is a 2-form on $X$, invariant under the local flow of 
$\xi$. If $\omega$ vanishes on the fibers of $\pi$, then $\omega$ is of type $(1,1)$ with respect to the complex structure $P(c)$.
\end{thm}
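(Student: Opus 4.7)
The plan is to reduce the claim to a statement about the holomorphic extension $\omega^\bC$ of $\omega$ on the ambient complexification $X^\bC$, where the biholomorphism $H$ from the proof of Proposition 4.3 can be recognized as the complex-time flow of $\xi^\bC$. To set this up I would first extend the data holomorphically: uniqueness of real-analytic continuation turns $\omega$ into a holomorphic 2-form $\omega^\bC$ on a neighborhood of $X$ in $X^\bC$; the invariance of $\omega$ under the flow of $\xi$ becomes invariance of $\omega^\bC$ under the complex-time flow $\Phi_s$ of $\xi^\bC$ wherever defined; and the vanishing of $\omega$ on the real fibers of $\pi$ becomes $\omega^\bC(u,v)=0$ whenever $u,v\in\Ker\pi^\bC_*$.

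Next I would translate the target condition. By definition $\omega$ is of type $(1,1)$ with respect to $P(c)$ iff its $\bC$-bilinear extension $\omega_\bC$ vanishes on $P(c)\times P(c)$. Using $P(c)=l^{-1}\Ker\psi^\bC_*$ from the proof of Proposition 4.3, together with a local coordinate check (in which $l$ appears as complex conjugation on fibers, while $\omega^\bC$ has real coefficients at points of $N$), I would verify the identity $\omega^\bC(l\alpha,l\beta)=\overline{\omega_\bC(\alpha,\beta)}$. So the goal reduces to showing that $\omega^\bC$ vanishes on $\Ker\psi^\bC_*\times\Ker\psi^\bC_*\subset TN^\bC|_N$.

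Finally I would exploit the flow interpretation of $H$. Under the open embedding $\iota_0\colon N^\bC\hookrightarrow X^\bC$, $y\mapsto y(0)$, the biholomorphism $H(y)=y(c)$ factors as $\Phi_c\circ\iota_0$, since every $y\in N^\bC$ is a trajectory of $\xi^\bC$. Consequently $H^*\omega^\bC=\iota_0^*\Phi_c^*\omega^\bC=\iota_0^*\omega^\bC=\omega^\bC|_{N^\bC}$. For $\alpha,\beta\in\Ker\psi^\bC_*$, the factorization $\psi^\bC=\pi^\bC\circ H$ gives $H_*\alpha,H_*\beta\in\Ker\pi^\bC_*$, whence
\[
\omega^\bC(\alpha,\beta)=(H^*\omega^\bC)(\alpha,\beta)=\omega^\bC(H_*\alpha,H_*\beta)=0,
\]
as required. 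The main obstacle is the translation step --- keeping the $\bC$-bilinear extension of $\omega$ on $\bC TN$ (which governs the $(1,1)$-condition) distinct from the holomorphic extension on $N^\bC$ (where the flow argument lives), while relating them through $l$; once that bookkeeping is done, flow-invariance reduces the problem directly to the fiber-vanishing hypothesis.
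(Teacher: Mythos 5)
Your overall strategy is genuinely different from the paper's, and it contains a real gap: your very first step manufactures a holomorphic extension $\omega^\bC$ of $\omega$ to a neighborhood of $X$ in $X^\bC$, which presupposes that $\omega$ is real-analytic. The theorem does not assume this --- $\omega$ is just a 2-form on $X$, invariant under the local flow and vanishing on the fibers --- and the paper's proof never needs it, because it only ever evaluates $\omega$ at the real point $y\in N\subset X$. A secondary but related problem is one of domains: even for analytic $\omega$, the extension $\omega^\bC$ (and the analytic continuation of its vanishing on $\Ker\pi^\bC_*$) lives only on some neighborhood of $X$ in $X^\bC$, whereas your final computation evaluates $\omega^\bC$ at $H(y)=\ty(c)$, a genuinely complex point that need not lie in that neighborhood once $c\in C$ is not small; the paper's proof explicitly lets $c$ range over all of $C$.

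The instructive comparison is this. For $c\in C\cap\bR$ the two arguments coincide: $P(c)=\Ker\psi_{c*}$ is the complexified tangent bundle to the fibers of $y\mapsto\pi(y(c))$, and real-time flow invariance plus vanishing on the fibers of $\pi$ gives $\omega|P(c)=0$ --- everything at real points, no complexification of $\omega$ needed. To pass to complex $c$, you transport the problem to the complex point $\ty(c)$ via the complexified flow, which is what forces $\omega^\bC$ into existence; the paper instead keeps the bilinear form $\omega_y$ fixed at the real point $y$ and moves only the subspace, observing that $c\mapsto P(c)_y=l^{-1}\Phi_{c*}^{-1}\Ker\pi^\bC_{*,\ty(c)}$ is a holomorphic curve in the Grassmannian of $\bC T_yN$, so that $c\mapsto\omega_\bC|P(c)_y$ is holomorphic in $c$, vanishes for real $c$, and hence vanishes on all of $C$. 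In effect, if you rewrite your final display as a function of $c$ with $y$ fixed and pull everything back to $\bC T_yN$ before invoking holomorphy, you recover the paper's argument and the need for $\omega^\bC$ (hence for analyticity of $\omega$) evaporates. As written, your proof establishes the theorem only under the additional hypothesis that $\omega$ is real-analytic and, strictly speaking, only for $c$ close enough to $0$; the remaining steps (the identity $\omega^\bC(l\alpha,l\beta)=\overline{\omega_\bC(\alpha,\beta)}$ at real points, the factorization $H=\Phi_c\circ\iota_0$, and the reduction of the $(1,1)$ condition to vanishing on $P(c)\times P(c)$) are correct.
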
  

\begin{proof}
We need to show that $\omega|P(c)=0$ and $\overline{\omega}|P(c)=0$; but the latter will follow once the former is proved. This we will do in 
the general setting of 
Example 4.2 (so $\pi_*\circ \xi$ is not necessarily a local diffeomorphism and $P(c)$ need not be a complex structure). First assume 
$c\in C\cap \bR$. Then $\psi_c(y)=\pi(y(c))\in M$ for $y\in N$ and
\[\psi_{c*}\bC TN\subset \bC TM\subset\bC TM^\bC | M.\]
Since $M\subset M^\bC$ is maximally real, $\bC TM$ and $T^{10}M^\bC | M$ intersect each other in the zero section, and so
\[P(c)=(\psi_{c*})^{-1} T^{10}M^\bC=\text{Ker}\,\psi_{c*}.\]
In other words, $P(c)$ is the complexified tangent bundle to the fibers of $y\mapsto \pi(y(c))$. By our assumption $\omega | P(c)=0$ when 
$c=0$; and since for $c\in\bR$ small $(c,y)\mapsto y(c)$ is the flow of $\xi$, invariance of $\omega$ implies the same for such $c$. Let $c$ 
now vary in $C$. For fixed $y\in N$, as a point in the Grassmannian of complemented subspaces of $\bC T_yN$, $P(c)_y$ varies holomorphically. 
Therefore by analytic continuation $\omega | P(c)_y=0$ for $c\in C$.
\end{proof}

\noindent
\begin{example}
Let $\oD, M$ be finite dimensional smooth Riemannian manifolds, $\oD$ compact with boundary, $M$ without boundary. A $C^2$ map $x:\oD\to M$ is 
harmonic if it is a critical point of the Dirichlet energy functional for maps with fixed boundary values on $\partial D$. This means that its 
tension field $\tau(x)$ is zero; here $\tau$ is a certain second order partial differential operator. When $\dim D=2$, both Dirichlet energy and
$\tau$ stay the same if the metric of $\oD$ is multiplied by a smooth function, hence one can talk about harmonicity of a map from a Riemann
surface into a Riemannian manifold; and the composition of a harmonic map with a conformal map $\oD\to \oD$ is also harmonic. In this example
the manifold $N$ will consist of certain harmonic maps between analytic Riemannian manifolds, but to ensure the maps considered form a
Banach manifold,  and when the source is two dimensional, conformal maps act continuously, we have to proceed with some care.

Fix a number $p>2$ that is not an integer. H\"older maps $\oD\to M$ of class $C^p$ form a Banach manifold $C^p(\oD, M)$. However, we will work
with the less common manifold of little H\"older maps $c^p(\oD, M)$ instead, which can be defined as the closure of $C^\infty(\oD,M)$ in
$C^p(\oD, M)$. Equivalently, a map $x:\oD\to M$ is of class $c^p$ if it is $[p]$ times differentiable and its partial derivatives $y$ of order
$[p]$ satisfy
\[\lim\limits_{s\to t}\frac{|y(s)-y(t)|}{|s-t|^{\{p\}}}=0, \quad \{p\}=p-[p].\]
Here partial derivatives and $|s-t|$ are computed in local coordinates. As with $C^p(\oD, M)$, the tangent space of $c^p(\oD, M)$ at some 
$x$ can be identified with $c^p(\oD, x^* TM)$, the space of $c^p$ sections of the pullback bundle $x^*TM$. 

If $\lam \mapsto x_\lam$ is a smooth curve in $c^p(\oD, M)$ through a harmonic $x=x_0$, whose velocity $dx_\lam / d\lam$ at $\lam=0$ is a tangent vector $\xi\in T_xc^p(\oD, M)$, one can compute the directional derivative of the tension $\tau'(\xi)=d\tau(x_\lam)/d\lam$ at $\lam=0$. In the natural identification $T_x c^p(\oD,M)\approx c^p(\oD, x^*TM)$ it turns out that 
\[\tau'=\tau'_x: c^p(\oD, x^* TM)\to c^{p-2}(\oD, x^*TM)\]
is a second order linear partial differential operator, a Laplace operator on $x^*TM$. The restriction of $\tau'_x$ to
\[\{\xi\in c^p(\oD, x^* TM): \xi | \partial D=0\}\]
is Fredholm of index $0$, see \cite{Go}. (Goldstein does the computations for $C^p$ maps but they work the same for $c^p$ maps.) If this 
restriction is an isomorphism, we will call the harmonic map $x$ persistent. Clearly, harmonic maps close to persistent ones are themselves
persistent. Goldstein shows that if $M$ has nonpositive sectional curvatures, then every harmonic map is persistent. His argument gives that
regardless of curvature, constant maps are always persistent. His subsequent analysis (an application of the implicit function theorem) gives
that in $c^p(\oD, M)$ persistent harmonic maps form a not necessarily closed submanifold that we denote $N^0$; and the map
\[N^0\ni x\mapsto x | \partial D\in c^p(\partial D, M)\]
is a local diffeomorphism. Further, $T_xN^0$ consists of $\xi\in T_x c^p(\oD, M)$ that $\tau'$ annihilates.
\end{example}

We now assume that $\oD, M$ are analytic Riemannian manifolds. According to a general theorem of Morrey, harmonic maps are analytic on $D$.

\begin{thm} 
Any $x\in N^0$ has a neighborhood $N$ and there are complexifications $D^\bC\supset D$, $M^\bC\supset M$ such that every $y\in N$ extends to a holomorphic map $\ty: D^\bC\to M^\bC$. If $k\in \bN$, $D^\bC$ can be chosen so that the map $\psi: N\to (M^\bC)^k$ defined by
\[\psi(y)=\psi_c(y)=(\ty(c_1),\dots, \ty(c_k)), \quad c=(c_1,\dots, c_k)\in (D^\bC)^k\]
is smooth. Finally, if $c_j^0\in D$ are distinct for $j=1,\dots, k$, and $c\in (D^\bC)^k$ is sufficiently close to $c^0=(c^0_1,\dots, c^0_k)$, then $P(c)=P=\psi_*^{-1} T^{10}(M^\bC)^k$ is an involutive structure of co-rank  $=k\dim M$ in a neighborhood of $x$ in $N$. The neighborhood depends on $c^0$ but not on $c$ sufficiently close to it.
\end{thm}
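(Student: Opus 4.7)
The plan is to break the proof into three stages: (i) construct $\oD^\bC\supset\oD$, $M^\bC\supset M$ and a smooth map $y\mapsto\ty$ on a neighborhood $N\subset N^0$ of $x$; (ii) deduce smoothness of $\psi$; (iii) show $P(c)$ is a subbundle of co-rank $k\dim M$, after which involutivity comes for free from Lemma 3.1.

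For stage (i) I would take complexifications $\oD^\bC\supset\oD$, $M^\bC\supset M$ via Bruhat--Whitney/Patyi--Simon as in Example 4.1. Because $\tau$ has analytic coefficients, it extends to a holomorphic operator $\tau^\bC$ on a complex Banach manifold $\mathcal H^p$ of little-H\"older holomorphic maps $\oD^\bC\to M^\bC$, valued in $c^{p-2}$-sections of the pulled-back tangent bundle. By Morrey's theorem $x$ is analytic on $D$, so after shrinking $\oD^\bC$ its holomorphic extension $\tx$ belongs to $\mathcal H^p$ and satisfies $\tau^\bC(\tx)=0$. The derivative $D\tau^\bC(\tx)$ restricted to variations vanishing on $\partial D$ is the complexification of the restriction of $\tau'_x$ to $\{\xi:\xi|\partial D=0\}$, which is an isomorphism by persistence; since being an isomorphism is open in the Fredholm index-zero class, the complex operator is also an isomorphism after a further shrinkage of $\oD^\bC$. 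The holomorphic implicit function theorem then yields a complex submanifold $\mathcal N\subset\mathcal H^p$ near $\tx$ of solutions of $\tau^\bC=0$ on which restriction to $\partial D$ is a local biholomorphism onto a piece of the $c^p$-Banach manifold of maps $\partial D\to M^\bC$. The real slice of $\mathcal N$ is identified with a neighborhood $N$ of $x$ in $N^0$, and $y\mapsto\ty$ is the inverse of boundary restriction. Enlarging $\oD^\bC$ to contain the chosen evaluation points $c^0_j$ does not destroy the isomorphism condition by the same openness, so $y\mapsto\ty$ remains smooth on $N$ after enlargement.

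Stage (ii) is then routine: evaluation at a fixed point is continuous linear on the Banach space of holomorphic maps, and jointly smooth in $(\ty,c_j)$, so $\psi(y)=(\ty(c_1),\ldots,\ty(c_k))$ is smooth on $N$. For stage (iii), Lemma 3.1 applied to $\psi$ and the involutive bundle $Q=T^{10}(M^\bC)^k$ yields involutivity of $P(c)=\psi_*^{-1}Q$ as soon as $P(c)$ is a subbundle of the stated co-rank. Letting $\Pi:\bC T(M^\bC)^k\to T^{01}(M^\bC)^k$ be the projection, $P(c)=\Ker(\Pi\circ\psi_*)$, so the claim is equivalent to surjectivity of $\Pi\circ\psi_*$. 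Since surjectivity is open, it suffices to check it at $y=x$, $c=c^0$. Because each $c^0_j\in D$ is real and $M\subset M^\bC$ is maximally real, $\Pi\circ\psi_*|_{\bC T_xN}$ at this point is $\bC$-linearly equivalent to the complexification of the real evaluation $ev:T_xN\to\prod_{j=1}^k T_{x(c^0_j)}M$, $v\mapsto(v(c^0_j))_j$. Hence everything reduces to surjectivity of $ev$.

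This is the heart of the argument. Suppose $ev$ is not onto; choose $(\phi_j)\in\prod_j T^*_{x(c^0_j)}M$ not all zero with $\sum_j\phi_j(v(c^0_j))=0$ for every Jacobi field $v\in T_xN$. Persistence supplies a distributional $\eta$ solving $(\tau'_x)^*\eta=\sum_j\delta_{c^0_j}\phi_j^\sharp$ with $\eta|\partial D=0$; elliptic regularity makes $\eta$ smooth off $\{c^0_j\}$. Green's identity rewrites the annihilation condition as $\int_{\partial D}\langle v|\partial D,\partial_n\eta\rangle=0$ for every $v\in T_xN$, and since the map $v\mapsto v|\partial D$ is surjective (linearization of the local diffeomorphism $N^0\to c^p(\partial D,M)$ from persistence), one concludes $\partial_n\eta\equiv 0$ on $\partial D$. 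Combined with $\eta|\partial D=0$, this is zero Cauchy data for $(\tau'_x)^*$, which is a second-order elliptic operator with analytic coefficients; classical unique continuation then forces $\eta\equiv 0$ in a neighborhood of $\partial D$, and by analytic continuation on all of $D\setminus\{c^0_j\}$, contradicting the nonzero point sources. Hence $ev$ is surjective, completing stage (iii). I expect the main obstacle to be stage (i): setting up the little-H\"older Banach space of holomorphic maps in which $\tau^\bC$ is genuinely holomorphic and its linearization retains the persistence isomorphism, while arranging that the common domain $\oD^\bC$ can be kept uniform over $y$ near $x$ and large enough to contain any prescribed evaluation points.
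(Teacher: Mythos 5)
Your overall architecture matches the paper's: reduce everything to (a) a uniform holomorphic extension $y\mapsto\ty$ depending smoothly on $y$, and (b) surjectivity of the evaluation $T_xN\to\prod_jT_{x(c^0_j)}M$ at the real points $c^0_j$, after which Lemma 3.1, maximal reality of $M^k\subset(M^\bC)^k$, and openness of surjectivity finish the argument. But your stage (i) has a genuine gap, and it sits exactly where you suspected. The complexified equation $\tau^\bC(\ty)=0$ is a PDE on the two-real-dimensional slice $D$, while your unknowns live in a Banach manifold $\mathcal H^p$ of holomorphic maps on the four-real-dimensional $\oD^\bC$, with the "boundary condition'' imposed on $\partial D$ --- which is \emph{not} the boundary of $D^\bC$ but a curve inside it. The linearized problem ``holomorphic sections of $\tx^*TM^\bC$ over $\oD^\bC$, vanishing on $\partial D$, annihilated by the Jacobi operator along $D$'' is not a Fredholm boundary value problem, and restriction from holomorphic maps on $D^\bC$ to maps on $D$ has no closed range in the relevant norms, so ``openness in the Fredholm index-zero class'' does not apply and the holomorphic implicit function theorem has nothing to bite on. Moreover, even granting a solution operator from boundary data on $\partial D$ to holomorphic maps on $\oD^\bC$, identifying its real slice with a neighborhood of $x$ in $N^0$ requires knowing in advance that every harmonic $y$ near $x$ extends holomorphically to the \emph{fixed} $\oD^\bC$ --- which is the uniform-domain statement you are trying to prove. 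The paper avoids all of this with a single citation: Morrey's iterative construction of the analytic continuation of solutions of analytic elliptic systems produces the extension $\ty$ on a domain $D^\bC$ uniform over a $c^p$-neighborhood of $x$, and the iteration visibly depends smoothly on $y$; that is the entire content of your stages (i)--(ii) in the paper's proof.

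Your stage (iii) is a genuinely different, and essentially correct, route to the surjectivity of the evaluation map. The paper argues directly: Cauchy--Kovalevskaya produces local Jacobi fields with prescribed values near each $c^0_j$, and Malgrange's approximation theorem globalizes them to elements of $T_xN$ with values close to the prescribed ones, which suffices since the target is finite dimensional. You instead run the dual argument: a functional annihilating the image yields a solution $\eta$ of the adjoint equation with point sources and zero Dirichlet data, Green's identity plus surjectivity of $v\mapsto v|\partial D$ kills the Neumann data, and unique continuation forces $\eta\equiv0$. This works, with two caveats: the coefficients of $\tau'_x$ are analytic only on $D$ (the map $x$ is merely $c^p$ up to $\partial D$), so ``classical unique continuation for analytic coefficients'' is not available at the boundary --- you should instead extend $\eta$ by zero across $\partial D$ using the vanishing Cauchy data and invoke interior unique continuation (Aronszajn) for the extended Hölder-coefficient operator. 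It is worth noting that Malgrange's approximation theorem is itself proved by exactly this duality-plus-unique-continuation mechanism, so your argument in effect re-proves the special case of it that the paper quotes.
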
 

\begin{proof}
The first statement is what Morrey proves for $C^p$ solutions of elliptic systems of analytic PDEs, of which the harmonic map equation 
$\tau(y)=0$, in local coordinates, is an instance. The construction of the holomorphic extension $\ty$ of $y$, by a certain iteration, is 
such that $\ty\in C^p(\overline{D^\bC}, M^\bC)$ depends smoothly on $y$. Hence $\psi$ is smooth.

It remains to show that under the assumptions $P\subset \bC TN$ is a subbundle, for involutivity then follows from Lemma 3.1. We claim that 
$\psi_c$ is a direct submersion for $c=c^0$, at least in a neighborhood of $x$; in other words, $\psi_{c*} | T_x N$ is surjective and its 
kernel is complemented. The latter is clear, 
the kernel is finite codimensional. For surjectivity we need to produce $\xi\in T_xN\approx c^p(\oD, x^*TM)$, annihilated by $\tau'$, that 
takes given values at $c^0_j$, or at least values near given ones. The Cauchy--Kovalevskaya theorem produces such a section, if not on all of 
$\oD$ but on some neighborhood of 
each $c^0_j$. The existence of a global $\xi$ follows from Malgrange's approximation theorem \cite[p.341]{Ma}.

Once $\psi_c$ is known to be a direct submersion near $x$ when $c=c^0$, the same follows for $c$ in a neighborhood of 
$c^0$, and the rest  of the theorem follows from  Lemma 3.1. 
\end{proof}

\noindent
\begin{example}
In Example 4.6 there was no action. But suppose $D\subset \bC$ is a bounded convex domain. If $x\in c^p(\oD, M)$ is 
harmonic and $g:\oD\to\oD$ is a smooth conformal map, the conformal invariance of Dirichlet energy implies
that  $x\circ g$ is also harmonic. If $x$ is close to constant, then so is $x\circ g$, hence persistent.  Thus on a neighborhood
$N^1\subset N^0$ of constant maps, reparametrizations define an action of the Lie monoid $G^0\subset \psl_2(\bC)$ of those automorphisms $g$ of $\bC\bP_1$ that map $D$ into itself:
\begin{equation} 
N^1\times G^0\ni (x,g)\mapsto xg=x\circ g\in N^1.
\end{equation} 
One easily checks that this is an action in the sense of Definition 2.2. (It is to ensure the continuity of (4.5) that we 
chose to work with 
little H\"older maps.) We will now show that the construction in 
Example 4.6 gives rise to involutive structures on neighborhoods of any $x\in N^1$ that are adapted to left invariant structures $S(c)$ on 
certain submonoids $G\subset G^0$ discussed in Example 3.3.

Fix $k\in\bN$ and, given $x\in N^1$, choose a complexification $D^\bC\supset D$ and a neighborhood $N$ of $x$ as in Theorem 4.7. If $\bC^*$ 
denotes $\bC$ with its opposite complex structure, we can realize this complexification as an open subset $D^\bC \subset\bC\times \bC^*$ into 
which $D$ is embedded diagonally. Fix also a bounded convex neighborhood of $(0,0)\in D^\bC$ of the form $E\times E$. The group $\psl_2(\bC)$ 
acts diagonally on $\bC\bP_1\times \bC\bP^*_1$; let $G$ be the Lie monoid of those $g\in\psl_2(\bC)$ that map $D$ and $E\times E$ into 
themselves. If $c_j=(\zeta_j,\zeta_{j+k})\in \bC\times \bC^*$ and $c=(\zeta_1,\dots, \zeta_{2k})$, on the one hand Example 3.3 defines an involutive 
structure $S(c)$ on $G$. On the other hand, for certain $c$ Theorem 4.7 defines an involutive structure $P(c)$ on a 
neighborhood of $x$. Theorem 3.4, with a suitable $C\subset (D^\bC)^k$ and 
\[\var: N\times C\ni (y,c)\mapsto (\ty(c_1),\dots, \ty(c_k))\in (M^\bC)^k\]
then gives that $P(c)$ is adapted to the action of $(G,S(c))$. 
\end{example}

\section{Geometry in involutive manifolds}

The second half of this paper addresses uniqueness of adapted involutive structures. One ingredient in the proof of uniqueness involves
geometric notions in involutive manifolds, and this will be the topic of this section and the next. In the special case of complex manifolds
the material will be familiar. The main result of these two sections is Theorem 6.2, a characterization of variations $df_\lam/d\lam$ in
families of involutive maps $f_\lam:(M,Q)\to (N, P)$. The characterization is in terms of a partial connection, a notion we now introduce. We will discuss partial connections on complex Banach bundles $\pi:E\to N$ over involutive manifolds 
(although the proper generality would be just direct submersions $\pi$ with base and fibers given involutive structures). 
The 
vertical tangent bundles $\bC T^{\rm{vert}}E, \bC T^{\rm{vert}10}E, \bC T^{\rm{vert}01}E\subset \bC TE$ consist of complexified 
tangent vectors to the fibers and $(1,0)$, respectively $(0,1)$, vectors tangent to the fibers.

To begin, for any complex Banach space $Z$ and $z\in Z$, $\zeta\in\bC T_zZ$ we define $\iota_z(\zeta)=\zeta\,\id_Z\in Z$ (the $\zeta$-derivative of $\id_Z$). For example  $\iota_z|T_z^{01}Z=0$ while $\iota_z|T_z^{10}Z$ is an isomorphism.
\begin{lem}
If $\sigma:N\to Z$ is a $C^1$ map, $x\in N$, and $v\in\bC T_xN$, then $\iota_{\sigma(x)}(\sigma_*v)=v\sigma$.
\end{lem}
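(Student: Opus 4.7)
The plan is to peel off the definitions and recognize the identity as a one-line instance of the chain rule. The key observation is that, by construction, a complex tangent vector $v\in\bC T_xN$ acts as a derivation on $Z$-valued $C^1$ functions on $N$, and the definition of the pushforward $\sigma_*$ is precisely the naturality formula
\[
(\sigma_*v)(f)=v(f\circ\sigma)
\]
valid for any $C^1$ map $f$ from $Z$ into a Banach space.

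First I would note that $\iota_z(\zeta)$ is by definition the value obtained by letting the derivation $\zeta\in\bC T_zZ$ act on the $Z$-valued function $\id_Z:Z\to Z$; this makes sense because $\iota_z$ was set up using exactly this recipe. Hence, unwinding the left-hand side,
\[
\iota_{\sigma(x)}(\sigma_*v)=(\sigma_*v)\,\id_Z.
\]
Then, applying the chain rule above with $f=\id_Z$,
\[
(\sigma_*v)\,\id_Z=v(\id_Z\circ\sigma)=v\sigma,
\]
which is exactly the right-hand side.

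The only point that requires care is that all the maps in sight are Banach-space valued rather than scalar-valued, and that $v$ is a complex rather than real tangent vector. However, the pushforward and the action of a tangent vector on a vector-valued map are defined componentwise (equivalently by post-composition with continuous linear functionals), and both operations are $\bC$-linear, so no obstacle arises: the chain rule extends from the scalar real case by linearity and by the Banach-space chain rule. So there is really no substantive obstacle; the lemma is a direct consequence of unwinding definitions and invoking the chain rule.
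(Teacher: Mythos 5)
Your proof is correct and is essentially identical to the paper's one-line argument, which also just writes $\iota_{\sigma(x)}(\sigma_*v)=(\sigma_*v)\,\id_Z=v(\id_Z\circ\sigma)=v\sigma$. The extra remarks about Banach-valued maps and $\bC$-linearity are fine but not needed beyond what the paper already takes for granted.
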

\begin{proof}
Indeed, $(\sigma_*v)\id_Z=v(\id_Z\circ\sigma)=v\sigma$.
\end{proof}

Applying this to the fibers of a complex Banach bundle $E$ we obtain a smooth map 
\begin{equation}
\iota=\iota_E:\bC T^{\rm{vert}}E\to E,
\end{equation}
that restricts to linear maps $\bC T_e^{\rm{vert}}E\to E_{\pi(e)}$ for $e\in E$.

\begin{defn}
A partial or $\oP$--connection on $\pi:E\to (N,P)$ is a smooth map $\Pi:\pi_*^{-1}\oP\to E$ such that $\Pi|(\pi_*^{-1}\oP)_e$ is a linear map to $E_{\pi(e)}$ for all $e\in E$, and
\begin{equation*}
\Pi|\bC T^{\rm{vert}}E=\iota.
\end{equation*}
\end{defn}

As in more familiar situations, a partial connection allows one to differentiate sections. Suppose  $s$ is a $C^1$ section 
of $E$ in a 
neighborhood of $x\in N$. We let
\begin{equation}  
\nabla_vs=\Pi s_*v\in E_x,\qquad v\in\oP_x.
\end{equation} 
This makes sense since $\pi_*s_*v=v$ and so $s_*v\in \pi_*^{-1}\oP$. Clearly, the differential operator $\nabla$ and $\Pi$ determine each other. We will also call $\nabla$ a partial connection.

Suppose now that $E=N\times Z\to N$ is a trivial bundle. A $\oP$--connection on $E$ determines a smooth map $\theta:\oP\times Z\to Z$ as follows. Given $v\in\oP_x$ and $z\in Z$, we let $0_z\in \bC T_zZ$ be the zero vector, and define $\theta$ by
\begin{equation}
\big(x,\theta(v,z)\big)=\Pi(v,0_z)\in E_x=\{x\}\times Z.
\end{equation}
Here and below we identify tangent spaces to products with products of the appropriate tangent spaces, so  
$(v,0_z)\in\bC T_xN\oplus\bC T_zZ\approx\bC T_{(x,z)}(N\times  Z)$.
Hence if $\zeta\in\bC T_zZ$,
$$
\Pi(v,\zeta)=\Pi(0,\zeta)+\Pi(v,0_z)=\iota_z(\zeta)+\big(x,\theta(v,z)\big).
$$
If $s(y)=\big(y,\sigma(y)\big)$ defines a $C^1$ section in a neighborhood of $x\in N$, then
\begin{gather}
\Pi s_*v=\Pi(v,\sigma_*v)=\big(x,\iota_{\sigma(x)}(\sigma_*v)+\theta(v,\sigma(x))\big),\qquad\text{or}\notag\\
\nabla_v s=\big(x, v\sigma+\theta(v,\sigma(x))\big)\in\{x\}\times Z=E_x 
\end{gather}
by Lemma 5.1. Clearly $\theta(v,z)$ and $\nabla_vs$ are linear in $v\in\oP_x$.

\begin{defn}
A partial connection $\Pi$ on a Banach bundle $E\to (N,P)$ is linear if $\nabla_vs$ is $\bC$--linear in $s$ as well. Equivalently, in any local trivialization $E|U\approx U\times Z$ ($U\subset N$ open), $\theta(v,z)$ is $\bC$--linear in $z$ as well.
\end{defn}   

If $\Pi$ is linear, in any trivialization $\theta$ can be viewed as a smooth map $\oP\to \text{End}\, Z$ (the Banach 
space of operators in $Z$), which is linear on each $\oP_x$. In other words, it is an $\text{End}\,Z$ valued relative 1-form, 
the connection form of the partial connection.

In finite dimensions linear partial connections were introduced by Bott when $P=\oP$ and by Rawnsley for certain other 
involutive structures, both through the associated covariant differentiation $\nabla$, see \cite{Bo, R}.

A partial connection $P$ on $E\to(N,P)$ can be pulled back along an involutive map $g:(M,Q)\to (N,P)$. Indeed, 
$\varrho : g^*E\to M$ comes with a canonical smooth map $\hat g: g^*E\to E$ covering  $g$, that is an isomorphism between 
fibers. The pull back connection $g^*\Pi: \rho_*^{-1}(\oQ)\to g^*E$ is the connection defined by
$$
\hat g(g^*\Pi)(\xi)=\Pi(\hat g_*\xi)\qquad\text{for}\quad \xi\in\rho_*^{-1}\oQ.
$$
When $E=N\times Z\to N$ is trivial and $\theta$ is as in (5.3), $\theta^g:\oQ\times Z\to Z$ corresponding to the $\oQ$--connection 
$g^*\Pi$ on $g^*E=M\times Z\to M$ is $\theta^g(u,z)=\theta(g_*u,z)$. Passing to local trivializations this implies for general 
bundles $E\to N$ and the pull back connection $\nabla^g$ that for sections $s$ of $E$ over a neighborhood of $g(x)$
\begin{equation}  
\nabla_u^g(g^*s)=g^*\nabla_{g_*u}s,\qquad u\in\oQ_x.
\end{equation} 
In particular, the pull back of a linear connection is itself linear.

Consider next along with the involutive manifold $(N,P)$ the Banach bundles 
\[\bC TN\to N,\qquad \pi:P'=\bC TN / \oP \to N.\]
We will denote the projection of $u\in \bC TN$ on $P'$ by $u'$, and likewise for sections of $\bC TN$. The bundle $P'$ has a 
natural $\oP$--connection, obtained via the following lemma.

\begin{lem}
Let $\xi\in\pi_*^{-1}\oP\subset\bC TP'$ and $\pi_*\xi=v\in\oP_x\setminus(0)$, $x\in N$. If $s$ and $w$ are smooth sections 
of $\bC TN$, respectively $\oP$, in a neighborhood of $x$ such that $s'_*v=\xi$ and $w(x)=v$, then
\begin{equation}
\Pi(\xi)=[w,s]'(x)\in P'_x
\end{equation}
depends only on $\xi$, and not on the choice of $s,w$.
\end{lem}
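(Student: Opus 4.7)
The plan is to verify that the proposed value $[w,s]'(x)\in P'_x$ is unaffected by the admissible variations of $w$ and of $s$, handling the two kinds of variation separately. The main inputs are the involutivity of $\oP$ (which follows from that of $P$ since $[\bar\alpha,\bar\beta]=\overline{[\alpha,\beta]}$ for vector fields $\alpha,\beta$) and the standard fact that the derivative of a section at one of its zeros is intrinsic.

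For independence of $w$, let $w_0,w_1$ be two sections of $\oP$ near $x$ with $w_j(x)=v$, and put $u=w_1-w_0$, so that $u$ is a section of $\oP$ vanishing at $x$. The local-coordinate formula for the Lie bracket gives $[u,s](x)=-(Du)_x\bigl(s(x)\bigr)$, where $(Du)_x$ is the intrinsic vertical derivative of $u$ at the zero $x$. Because $u$ takes values in the subbundle $\oP$, this derivative actually lands in $\oP_x$, and so $[u,s](x)\in\oP_x$, i.e.\ $[u,s]'(x)=0$.

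For independence of $s$, suppose $s_0,s_1$ are sections of $\bC TN$ with $s'_{0*}v=\xi=s'_{1*}v$, and set $r=s_1-s_0$. Choose a local frame $(\alpha_1,\dots,\alpha_k,\beta_1,\dots,\beta_l)$ for $\bC TN$ near $x$ in which $(\alpha_\mu)$ frames $\oP$, and expand $r=\sum a^\mu\alpha_\mu+\sum b^\nu\beta_\nu$. Reading the equality $s'_{0*}v=s'_{1*}v$ in the induced trivialization of $P'$ yields the two first-order conditions $b^\nu(x)=0$ and $v(b^\nu)|_x=0$ for every $\nu$. Expanding,
\[
[w,r]=\sum w(a^\mu)\,\alpha_\mu+\sum a^\mu[w,\alpha_\mu]+\sum w(b^\nu)\,\beta_\nu+\sum b^\nu[w,\beta_\nu],
\]
and evaluating at $x$: the first summand is in $\oP_x$; the second is in $\oP_x$ by involutivity of $\oP$ applied to the brackets $[w,\alpha_\mu]$ of sections of $\oP$; the last two vanish because $w(b^\nu)|_x=v(b^\nu)|_x=0$ and $b^\nu(x)=0$. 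Hence $[w,r](x)\in\oP_x$, giving $[w,r]'(x)=0$. The main obstacle is extracting from the single jet-level equation $s'_*v=\xi$ the two conditions $b^\nu(x)=0$ and $v(b^\nu)|_x=0$ on the frame coefficients of $r=s_1-s_0$; once that translation is in place, involutivity of $\oP$ together with the zero-section identity for Lie brackets finishes the verification.
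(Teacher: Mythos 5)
Your proof is correct, but it is organized differently from the paper's. For the independence of $w$ the paper does not invoke the intrinsic derivative at a zero; it instead takes $1$-forms $\alpha$ annihilating $\oP$ and reads off $\alpha[w,s](x)=0$ from the Cartan formula $d\alpha(w,s)=w\alpha(s)-s\alpha(w)-\alpha[w,s]$ when $w(x)=0$. For the independence of $s$ the paper first uses involutivity of $\oP$ to reduce to the case $s(x)=t(x)$, then pairs against forms $\beta\in\Gamma({P'}^*)$ and finishes with a chain-rule identity $v\beta(t')=v\beta(s')$ coming from $s'_*v=t'_*v$; you instead extract the two jet conditions $b^\nu(x)=0$ and $v(b^\nu)|_x=0$ from $s'_{0*}v=s'_{1*}v$ in an adapted trivialization and expand the bracket directly. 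The two routes are dual to one another (forms annihilating $\oP$ versus a frame adapted to $\oP$), use involutivity of $\oP$ in exactly one place each, and are of comparable length; yours has the advantage of making the first-order data entering the bracket completely explicit. One caveat: the paper allows $N$ to be a Banach manifold, so $\oP$ and $P'$ need not admit finite frames $(\alpha_\mu)$, $(\beta_\nu)$. Your argument survives verbatim if you replace the frame expansion by a local splitting $\bC TN\cong N\times(B\oplus B')$ with $\oP\cong N\times B$, writing $r=r_{\oP}+r_{\perp}$ and noting that $[w,r_{\oP}]$ is a section of $\oP$ by involutivity while $r_{\perp}$ vanishes to the required order at $x$ in the direction $v$; but as written the finite sums restrict you to finite rank and corank.
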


\begin{proof}
If $\alpha$ is a smooth 1-form on $N$ and $w,s$ are smooth sections of $\bC TN$, 
\begin{equation}  
d\alpha(w, s)=w\alpha(s)-s\alpha(w)-\alpha[w,s].
\end{equation} 
Suppose $\alpha$ vanishes on $\oP$ and $w$ is a section of $\oP$. If $w$ vanishes at some $x\in N$, then the first three 
terms in (5.7) vanish at 
$x$; hence so does $\alpha[w,s]$. As at the price of shrinking $N$ we can arrange that $\oP$ is the intersection of Ker $\alpha$ for a family of
1-forms $\alpha$, it follows that $[w,s](x)\in \oP_x$, and $[w,s]'(x)=0$. This implies that as far as dependence on $w$, 
$[w,s]'(x)$ depends only on $w(x)=\pi_*\xi$.

At the same time $[w,s]'(x)$ depends on $s$ only through $s'_*v$. Indeed, let $t$ be another section of
$\bC TN$ near $x$ with $t'_*v=\xi$.  This implies $s'(x)=t'(x)$. To show $[w,s]'(x)=[w,t]'(x)$ we can even assume $s(x)=t(x)$, for if we 
add to $t$ a section of $\oP$, then by involutivity $[w,t]'$ does not change. Let
$\beta\in\Gamma({P'}^*)$ (section of the dual of $P'$); then $\alpha(u)=\beta(u')$ 
defines a $1$--form $\alpha$ on $N$ that vanishes on $\oP$. From (5.7)
$$
\beta[w,s]'(x)=v\beta(s')-s(x)\alpha(w)-d\alpha\big(v,s(x)\big)
$$
and similarly for $t$. Subtracting the two yields $\beta[w,t]'(x)-\beta[w,s]'(x)=v\beta(t')-v\beta(s')=0$ by the chain rule. Since this is true for any $\beta$, $[w,s]'(x)=[w,t]'(x)$ follows.
\end{proof}
\begin{lem}
$\Pi:\pi_*^{-1}\oP\setminus\bC T^{\rm{vert}}\to P'$ defined in Lemma 5.4 extends to a linear connection $\pi_*^{-1}\oP\to P'$. 
The corresponding covariant differentiation is given by
\begin{equation}
\nabla_ws'=[w,s]'.
\end{equation}
\end{lem}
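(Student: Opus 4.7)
The plan is to pass to a local trivialization of $\bC TN$ adapted to $\oP$, read off $\Pi$ explicitly from the formula of Lemma 5.4, and then use the resulting local expression both to define the extension across the vertical subbundle and to verify the remaining axioms.

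Fix $x\in N$ and shrink $N$ so that $\bC TN$ trivializes as $N\times W$ with $\oP$ corresponding to $N\times W_0$, where $W_0$ is a closed complemented subspace of the Banach space $W$. Setting $Z=W/W_0$, one gets $P'\cong N\times Z$, and any $\xi\in\bC T_{(y,z)}P'$ decomposes as a pair $(v,\eta)\in W\oplus Z$ with $\pi_*\xi=v$; the condition $\xi\in\pi_*^{-1}\oP$ becomes $v\in W_0$. Given such $\xi$ with $v\neq 0$, I would feed Lemma 5.4 the constant section $w\equiv v$ and any section $s:N\to W$ of $\bC TN$ with $s(x)+W_0=z$ and $ds(x)v+W_0=\eta$. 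Since $dw\equiv 0$, the bracket in the chart reduces to $[w,s](x)=ds(x)v$, whence
\[
\Pi(\xi)=[w,s]'(x)=ds(x)v+W_0=\eta.
\]
So in this trivialization $\Pi$ is simply the projection $(v,\eta)\mapsto\eta$, which is linear in $(v,\eta)$ and extends verbatim to $v=0$. This is how I would define the extension.

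The remaining properties are then read off by inspection. On vertical vectors $(0,\eta)$ the formula gives $\Pi(0,\eta)=\eta$, which under the canonical identification of $\bC T^{{\rm vert}}_{(x,z)}(N\times Z)$ with $Z$ coincides with $\iota$, so the extension satisfies Definition 5.2. The local $\theta$ from (5.3) vanishes identically and is hence trivially $\bC$--linear in $z$, so Definition 5.3 is satisfied as well. Finally, for any section $s$ of $\bC TN$ and any $w\in\oP_x$ (including $w=0$) the trivialization gives $\nabla_ws'=\Pi(s'_*w)=\Pi(w,ds(x)w+W_0)=ds(x)w+W_0$, which equals $[w,s]'(x)$ because the extra term $dw(x)s(x)$ in the bracket lies in $W_0$, as $w$ is a section of $\oP$. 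This establishes (5.8).

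The one step requiring a moment of care is the independence of the local formula $\Pi(v,\eta)=\eta$ from the choice of trivialization, and this is really the only potential obstacle. On the open dense subset $\{v\neq 0\}$ independence is automatic, because Lemma 5.4 is coordinate--free there; smoothness of the formula in $(v,\eta)$ combined with density then forces the extension across $\{v=0\}$ to agree with $\iota$ in every trivialization, so the whole construction glues to a globally defined linear $\oP$--connection with the asserted covariant derivative.
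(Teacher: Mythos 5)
There is a genuine gap, and it sits at the heart of your computation. You choose a bundle trivialization $\bC TN\cong N\times W$ in which $\oP$ becomes the constant subbundle $N\times W_0$, and then compute the Lie bracket by the naive chart formula, $[w,s](x)=ds(x)v-dw(x)s(x)$, so that for constant $w\equiv v$ you get $[w,s](x)=ds(x)v$ and hence $\Pi(v,\eta)=\eta$, i.e.\ an identically vanishing connection form $\theta$. But that bracket formula is valid only in the trivialization of $\bC TN$ induced by a chart of $N$, and a trivialization in which $\oP$ is a \emph{constant} subbundle is in general a different one: straightening $\oP$ inside a chart would amount to local integrability of the involutive structure, which is neither assumed nor true in general (it already fails for non-integrable almost complex or CR structures). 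In the $\oP$-adapted trivialization the bracket of two constant sections is generally nonzero; these correction terms are exactly the connection form $\theta(v,z)=[w,s]'(x)$ of the paper's formula (5.10), which does not vanish. So the local formula $\Pi(v,\eta)=\eta$ is wrong, the verification of Definitions 5.2 and 5.3 and of (5.8) built on it collapses, and the closing ``density'' argument cannot repair this, since continuity of the extension across $\{v=0\}$ is precisely what the (incorrect) explicit formula was supposed to supply.

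For comparison, the paper separates the two issues your proof conflates. It first proves the compatibility $\Pi(\xi+\eta)=\Pi(\xi)+\iota(\eta)$ (its (5.9)) by an intrinsic computation carried out in a genuine chart, where the naive bracket formula is available but $\oP$ is allowed to vary; only afterwards does it pass to an adapted trivialization $\bC TN=N\times A$, $\oP=N\times B$, where it reads off $\theta(v,z)=[w,s]'(x)$ from \emph{constant} sections and must still prove that this $\theta$ is smooth, which is where Lemma 5.6 on maps factoring through quotient bundles enters. To fix your argument you would have to follow one of these two tracks consistently: either work in a chart and accept that $\oP$ is not constant there, or work in the adapted trivialization and keep the bracket's correction terms, which are the whole content of the connection form.
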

The formula generalizes Bott's construction of a partial connection on the normal bundle of a foliation of a finite dimensional 
manifold. When $(N,P)$ is a complex manifold, $P'$ can be identified with $P=T^{10} N$ and $\nabla$ becomes the Dolbeault 
operator on the holomorphic vector bundle $P'\approx T^{10}N\to N$. We will call $\Pi$ or $\nabla$ the Bott connection on 
$P'\to N$; it is linear by (5.8).
\begin{proof}
Let us write $\iota=\iota_{P'}$ and $\tilde\iota=\iota_{\bC TN}$, cf. (5.1). If we define the extension to 
$\bC T^{\rm{vert}}P'=\Ker\,\pi_*\to P'$ by $\Pi|\bC T^{\rm{vert}}P'=\iota$, the extended
$\Pi$ satisfies
\begin{equation}
\Pi(\xi+\eta)=\Pi(\xi)+\iota(\eta),\qquad\text{if}\quad \xi\in(\pi_*^{-1}\oP)_p,\, \eta\in\bC T_p^{\rm{vert}}P',\, p\in P'.
\end{equation}

Indeed, let $\pi_*\xi=v\in\oP_x$. If $v=0$, (5.9) is just linearity of $\iota$; if $v\in\oP_x\setminus(0)$, we compute the 
quantities in (5.9) as follows.
We assume $N$ is an open subset of a real Banach space $Z$ so that $\bC TN$ is $N\times(\bC\otimes Z)$.
We lift $\xi,\eta$ to
$\tilde\xi\in\bC T_{\tilde p}(\bC TN)$ and $\tilde\eta\in\bC T^{\rm{vert}}_{\tilde p}(\bC TN)$ with some $\tilde p\in \bC TN$, 
and choose 
$w\in\Gamma(\oP)$, $s,t\in\Gamma(\bC TN)$ so that $w(x)=v$ and $s_*v=\tilde\xi$, $t_*v=\tilde\xi+\tilde\eta$. 
In particular, $s(x)=t(x)$.
Let $w(y)=\big(y,\omega(y)\big)$, $s(y)=\big(y,\sigma(y)\big)$, $t(y)=\big(y,\tau(y)\big)$ with smooth functions 
$\omega,\sigma,\tau:N\to \bC\otimes Z$. Then 
$$
\tilde\iota(\tilde\eta)=\tilde\iota(t_*v-s_*v)=\tilde\iota\big((v,\tau_*v)-(v,\sigma_*v)\big)=\tilde\iota(0,\tau_*v-\sigma_*v)=\big(x,v(\tau-\sigma)\big)
\in\bC T_xN
$$
by Lemma 5.1. Furthermore $[w,s](x)=\big(x,v\sigma-\sigma(x)w\big)$ and similarly for $[w,t]$, whence
$$
\Pi(\xi+\eta)-\Pi(\xi)=[w,t-s]'(x)=\big(x,v(\tau-\sigma)\big)'.
$$
Since $\tilde\iota(\tilde\eta)'=\iota(\eta)$, (5.9) follows.

To prove that $\Pi$ is smooth, and linear on $(\pi_*^{-1}\oP)_p$, $p\in P'$, at the price of shrinking and choosing a 
different  trivialization we can assume that $\bC TN=N\times A$ and $\oP=N\times B$, with $A\supset B$ a Banach space 
and its subspace. Thus $P'=N\times Z$, with $Z=A/B$. If $x\in N$, $v\in\oP_x$, $a\in A$, and 
$z=a'\in Z$, let $s,w$ be constant sections of $N\times A\to N$ and 
$N\times B\to N$ such that $s(x)=(x,a)$ and $w(x)=v$, then let
\begin{equation}
\big(x,\tilde\theta(v,a)\big)=[w,s](x) \in\bC T_xN,\qquad \big(x,\theta(v,z)\big)=[w,s]'(x)\in P'_x.
\end{equation}
By Lemma 5.4 $[w,s]'(x)$ indeed depends only on $v$ and $z$. Clearly $\tilde\theta:\oP\times A\to\bC TN$ is smooth, 
and by Lemma 5.6 below so is $\theta:\oP\times Z\to P'$. Next, if 
$\xi=(v,\zeta)\in(\pi_*^{-1}\oP)_{(x,z)}\subset\bC T_{(x,z)}P'\approx\bC T_xN\oplus\bC T_zZ$ and 
$0_z\in\bC T_z Z$ is the zero vector, by (5.9)
$$
\Pi(v,\zeta)=\iota(0,\zeta)+\Pi(v,0_z)=\big(x,\iota_z(\zeta)+\theta(v,z)\big)\in P'_x.
$$
Thus $\Pi|(\pi_*^{-1}\oP)_{(x,z)}$ is linear and $\Pi$ is indeed a connection.

Finally, if $s$ is a smooth section of $\bC TU$ over some neighborhood $U\subset N$ of $x$, then 
$$
\nabla_vs'=\Pi s'_*v=[w,s]'(x)
$$ 
by (5.2). This is linear in $s'$, and $\Pi$ is a linear connection.
\end{proof}

We still need to prove
\begin{lem}   
Suppose $V$ is a Banach space, $N$ a Banach manifold, $E\to N$ a smooth Banach bundle, $F\subset E$ a subbundle, 
$q:E\to E/F$ the quotient map, and
 $\theta: E/F\to V$ an arbitrary map. If $\psi=\theta\circ q$ is smooth, then so is $\theta$.
\end{lem}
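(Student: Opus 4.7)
The plan is to work locally and exploit the existence of smooth local sections of the quotient projection $q:E\to E/F$. Smoothness being a local property, it suffices to cover $E/F$ by open sets $W$ and exhibit a smooth map $\sigma:W\to E$ with $q\circ\sigma=\id_W$; for any such $\sigma$ one has
\[
\theta|W=\theta\circ q\circ\sigma=\psi\circ\sigma,
\]
and the right-hand side is smooth as a composition of smooth maps. Note that no a priori regularity of $\theta$ enters: the identity only uses the fact that $q\circ\sigma$ is the identity.

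First I would choose the right trivialization. Fixing $\bar e\in E/F$ lying over $x\in N$, the Banach subbundle hypothesis (in the sense of Lang, whose conventions the paper follows) permits simultaneous trivializations of $E$ and $F$ over some open neighborhood $U\subset N$ of $x$: there exist isomorphisms $E|U\approx U\times A$ and $F|U\approx U\times B$, where $A$ is a Banach space and $B\subset A$ is a closed subspace admitting a closed complement $C$, so that $A=B\oplus C$. Under these identifications $(E/F)|U\approx U\times(A/B)\approx U\times C$, and $q$ becomes the projection $(u,b,c)\mapsto(u,c)$.

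The required section is then immediate: $\sigma:U\times C\to U\times A$, $\sigma(u,c)=(u,0,c)$, is visibly smooth and satisfies $q\circ\sigma=\id$ on its domain. Hence on the open set $(E/F)|U$ we have $\theta=\psi\circ\sigma$, which proves smoothness of $\theta$ near $\bar e$. Since $\bar e$ was arbitrary, $\theta$ is smooth on all of $E/F$.

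The argument has no real obstacle; the only delicate point is the existence of the closed complement $C$ of $B$ in $A$, which is indispensable in the Banach setting where not every closed subspace is complemented, but is built into the very notion of a Banach subbundle used in the paper. Once complementation is in hand, the proof is essentially formal.
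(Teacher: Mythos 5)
Your argument is correct only under an extra hypothesis that the lemma, as the paper uses it, does not grant: that the fibre $B$ of $F$ admits a closed complement in the fibre $A$ of $E$. You assert that complementedness is ``built into the very notion of a Banach subbundle used in the paper,'' but the paper's conventions point the other way: in Proposition 4.3 the author takes care to prove that a certain $P$ is a \emph{complemented} subbundle, a qualifier that would be redundant if every subbundle were complemented, and in the proof of Lemma 5.5 the local model is stated merely as ``$A\supset B$ a Banach space and its subspace.'' A closed subspace of a Banach space need not be complemented (e.g.\ $c_0\subset\ell^\infty$), so your smooth section $\sigma(u,c)=(u,0,c)$ of $q$ need not exist, and with it the whole argument $\theta=\psi\circ\sigma$ collapses. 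This is precisely the difficulty the paper's proof is built to circumvent: it invokes Michael's selection theorem to produce a right inverse $r$ of the quotient map that is only \emph{continuous}, not linear and not differentiable, so that one cannot simply write $\theta=\psi\circ r$ and be done. Instead the paper proves by induction that all iterated directional derivatives of $\theta$ exist and satisfy $(\eta_1\cdots\eta_\nu\theta)(qz)=\bigl((r\eta_1)\cdots(r\eta_\nu)\psi\bigr)(z)$, using only that $q$ is linear with $qr=\mathrm{id}$ in the difference quotients, and then deduces smoothness from the continuity of these derivatives in all arguments.

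If one does add the splitting hypothesis, your proof is valid and is the natural short argument; it is what the paper's proof degenerates to when $r$ can be taken linear. But in the generality actually needed --- $F=\oP$ inside $E=\bC TN$, with no complementedness assumed --- the missing idea is exactly the replacement of a smooth section by a merely continuous one together with the directional-derivative induction, and without it the proof does not go through.
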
 

The proof of Lemma 5.5 uses this with  $\psi(v,a)=\big(x,\tilde\theta(v,a)\big)'$, $v\in \oP_x$, $a\in A$. 

\begin{proof}
We can assume $N$ is an open subset of a Banach space $X$, and $E=N\times A\to N$, $F=N\times B\to N$ with Banach 
spaces $A\supset B$. Let $Z=A/B$. We extend 
$q$ to a linear map $X\times A\to X\times Z$, which we still denote $q$. By Michael's selection theorem \cite{Mi} the quotient 
map $A\to Z$
has a continuous right inverse, and this gives rise to a continuous right inverse $r:X\times Z\to X \times A$ of $q$. We will 
show by induction
that iterated directional derivatives  
$\eta_1\dots\eta_\nu{\theta(y)}$ of all orders $\nu=0,1,\ldots$ and $\eta_j\in X\times Z$, $y\in N\times Z$ exist, and
\begin{equation}  
(\eta_1\dots\eta_\nu \theta)(qz)=\big((r\eta_1)\dots (r\eta_\nu)\psi\big)(z), \quad z\in N\times A.
\end{equation} 

This we know for $\nu=0$. Suppose it is true for some $\nu\ge 0$. If  $\eta_0,\dots, \eta_\nu\in X\times Z$,
$z\in N\times A$, 
\[
\frac{d}{dt}\Big |_{t=0}(\eta_1\dots\eta_\nu \theta)\big(qz+t\eta_0\big)=
\frac{d}{dt}\Big |_{t=0}\big((r\eta_1)\dots (r\eta_\nu)\psi\big)(z+tr\eta_0)\big),\quad z\in N\times A, 
\]
exists and equals $\big((r\eta_0)\dots (r\eta_\nu)\psi\big)(z)$. Thus $\theta$ indeed has directional derivatives of all 
orders and (5.11) holds. By (5.11) the directional derivatives $(\eta_1\dots\eta_\nu \theta)(y)$ depend continuously on 
$\eta_1,\dots,\eta_\nu, y$, whence $\theta$ is smooth as claimed.
\end{proof}

\section{Variation of involutive maps}

We start with a single involutive map $f:(M,Q)\to (N,P)$. The Bott connection $\nabla$ on $P'\to N$ pulls back to a 
$\oQ$--connection $\nabla^f$ on the induced bundle $f^*P'\to (M,Q)$. Given a smooth section $s$ of $\bC TM$ over 
some open $U\subset M$, its pushforward $f_*s$ is a vector field along $f$, i.e., a section of $f^*\bC TN| U$. Since $f_*s$ 
mod $\oP$ depends only on $s$ mod $\oQ$, there is an induced map $Q'\to f^*P'$, also denoted $f_*$. 

\begin{lem}  
If $w,s$ are sections of $\oQ$, $\bC TM$ over some open $U\subset M$, then
\begin{equation}  
\nabla^f_w f_*s'=f_*[w,s]'.
\end{equation} 
\end{lem}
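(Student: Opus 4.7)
Since the identity is local on $M$, I work in charts $U\subset M$ and $V\supset f(U)\subset N$ that realize $M$ and $N$ as open subsets of real Banach spaces $\tilde X$ and $\tilde Y$. This gives the natural trivializations $\bC TM|_U=U\times\bC\tilde X$ and $\bC TN|_V=V\times\bC\tilde Y$. The sections $w, s$ are then encoded by smooth maps $\tilde w,\tilde s:U\to\bC\tilde X$, and the hypothesis that $f$ is involutive becomes $Df(y)\tilde w(y)\in\oP_{f(y)}$ for every $y\in U$.

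To compute in $P'$, I trivialize it locally near $f(x)$. Choose a closed direct complement $\bC\tilde Y=\oP_{f(x)}\oplus Y_1$; after shrinking $V$, the subspace $\oP_z$ remains complementary to $Y_1$ for every $z\in V$, yielding a smooth family of projections $\pi_z:\bC\tilde Y\to Y_1$ with kernel $\oP_z$ and an identification $P'_z\cong Y_1$. Under this identification, the section $f_*s'$ of $f^*P'$ becomes the $Y_1$-valued map $b(y):=\pi_{f(y)}(Df(y)\tilde s(y))$. I then read off the connection form of the Bott connection on $P'$ by applying Lemma 5.4 with the constant section $\tilde S\equiv y_1\in Y_1$ of $\bC TN$ (so that $S'(z)=\pi_z(y_1)=y_1$) and any section $W$ of $\oP$ with $W(f(x))=v$: differentiating the identity $\pi_z\tilde W(z)\equiv 0$ converts the $D\tilde W$-contribution into a derivative of $\pi$, giving $\theta(v,y_1)=\partial_{y_1}\pi(v)$, the derivative of $z\mapsto \pi_z$ at $f(x)$ in direction $y_1\in Y_1\subset\bC\tilde Y$ evaluated at $v$. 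Combining this with (5.3) and (5.4), the pulled-back connection is
\[
\nabla^f_{w(x)}(f_*s')(x)\;=\;\tilde w(x)\cdot b\;+\;\partial_{b(x)}\pi\bigl(f_*w(x)\bigr).
\]

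With this formula in hand, the lemma reduces to comparing the right-hand side with $f_*[w,s]'(x)=\pi_{f(x)}\bigl(Df(x)(D\tilde s(x)\tilde w(x)-D\tilde w(x)\tilde s(x))\bigr)$. The $Df\cdot D\tilde s$-terms agree directly. Differentiating the involutivity relation $\pi_{f(y)}(Df(y)\tilde w(y))\equiv 0$ at $y=x$ in direction $\tilde s(x)$, and invoking the symmetry of $D^2f(x)$, accounts for the $D^2 f$- and $D\tilde w$-terms. What remains is an antisymmetric ``cross-derivative'' expression of the form $\partial_{f_*w(x)}\pi(a)-\partial_a\pi(f_*w(x))$ with $a\in\oP_{f(x)}$. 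This vanishes by the final piece of input, namely the involutivity of $\oP$ itself: writing $\oP_z=\{b+\phi(z,b):b\in\oP_{f(x)}\}$ with $\phi(z,\cdot)$ linear and $\phi(f(x),\cdot)=0$, a short computation shows that the Lie bracket at $f(x)$ of two sections $z\mapsto b+\phi(z,b)$ and $z\mapsto b'+\phi(z,b')$ of $\oP$ equals $\partial_z\phi(f(x),b')(b)-\partial_z\phi(f(x),b)(b')\in Y_1$; forcing this to lie in $\oP_{f(x)}\cap Y_1=(0)$ makes the bilinear form $(b,b')\mapsto\partial_z\phi(f(x),b)(b')$ symmetric, and hence the cross-derivative difference vanishes.

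I expect the main obstacle to be the bookkeeping around the varying fibers $\oP_z$ and the careful identification of the connection form $\theta$; once that is in place, the geometric content is the recognition that the remaining antisymmetric obstruction is exactly what the involutivity of $\oP$ kills, and the identity closes up.
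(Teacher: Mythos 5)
Your proof is correct, but it takes a genuinely different route from the paper's. The paper argues coordinate-freely: it first proves that $f$-related fields have $f$-related brackets ($f_*w=f^*v$, $f_*s=f^*t$ imply $f_*[w,s]=f^*[v,t]$), then handles the case of a direct embedding by extending $w,s$ to sections $v\in\Gamma(\oP)$, $t\in\Gamma(\bC TN)$ and invoking $\nabla_vt'=[v,t]'$ together with the pullback identity (5.5), and finally reduces a general $f$ to the embedding case via the graph map $x\mapsto(x,f(x))$ and the product connection $\nabla^M\boxplus\nabla$. You instead compute everything in charts: you trivialize $P'$ by a projection family $\pi_z$ with kernel $\oP_z$ onto a fixed complement $Y_1$, extract the connection form $\theta(v,y_1)=\partial_{y_1}\pi(v)$ of the Bott connection from (5.8) applied to a constant section, and then match the two sides term by term. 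I checked the bookkeeping: the $Df\cdot D\tilde s$ terms cancel directly, differentiating $\pi_{f(y)}(Df(y)\tilde w(y))\equiv 0$ in the direction $\tilde s(x)$ together with symmetry of $D^2f$ absorbs the $D^2f$ and $Df\cdot D\tilde w$ terms, and the residue is exactly $\partial_{f_*w}\pi(a)-\partial_a\pi(f_*w)$ with $a=f_*s(x)-b(x)\in\oP_{f(x)}$, which your graph computation $\oP_z=\{b+\phi(z,b)\}$ correctly identifies with the $Y_1$-component of a bracket of two sections of $\oP$, hence zero by involutivity. What each approach buys: yours avoids both the extension of $w,s$ to $f$-related fields on $N$ (which is why the paper needs the embedding hypothesis and then the graph trick) and makes completely explicit the single point where involutivity of $\oP$ enters, at the cost of a heavier computation and of re-deriving in your trivialization what the paper already packaged into (5.8), (5.10) and (5.5); the paper's argument is shorter given that machinery and more clearly invariant. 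To turn your plan into a complete proof you should write out the two displayed differentiations in full, but no essential step is missing.
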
   

When $f=\id_{(N,P)}$, (6.1) reduces to (5.8). The proof depends on certain operations with involutive 
manifolds and partial connections. Given involutive manifolds $(N_j, P_j)$ for $j=1,2$, their product
$(N_1,P_1)\times(N_2,P_2)$ is the involutive manifold $(N_1\times N_2, P_1\times P_2)$ with 
$(P_1\times P_2)_{(x,y)}=
P_{1x}\oplus P_{2y}\subset (\bC T_x N_1)\oplus(\bC T_yN_2)\approx \bC T_{(x,y)} N$. 
Suppose $E_j\to (N_j, P_j)$ for $j=1,2$ are complex Banach bundles 
with $\oP_j$--connections $\Pi_j$ and corresponding covariant differentiations $\nabla_j$. The product
Banach  bundle
\[E=E_1\times E_2\to (N_1, P_1)\times (N_2, P_2)\]
has a natural $\oP_1\times \oP_2$ connection $\Pi=\Pi_1\boxplus \Pi_2$ given by $\Pi(\xi,\eta)=\big(\Pi_1(\xi),\Pi_2(\eta)\big)\in E$.
The corresponding covariant differentiation $\nabla_1\boxplus\nabla_2$ is given by
\begin{equation}
(\nabla_1 \boxplus\nabla_2)_{(v_1, v_2)}s=\nabla_{1v_1}s_1\oplus \nabla_{2v_2}s_2, \quad \text{if } s(x,y)=\big(s_1(x), s_2(y)\big).
\end{equation} 

Suppose next that $\pi_j:E_j\to (N, P)$ for $j=1,2$ are complex Banach 
bundles over the same involutive manifold. $\oP$--connections $\Pi_j$ on $E_j$ induce a $\oP$--connection $\Pi$ on
$\pi:E=E_1\oplus E_2\to N$, 
$$
\Pi(\xi,\eta)=\Pi_1(\xi)\oplus\Pi_2(\eta)\in E,\qquad\text{for}\quad (\xi,\eta)\in\pi_{1*}^{-1}\oP\oplus\pi_{2*}^{-1}\oP
\approx\pi_*^{-1}\oP.
$$
The corresponding covariant derivatives $\nabla_1,\nabla_2,\nabla_1\oplus\nabla_2$ are related by
\[
(\nabla_1\oplus\nabla_2)_v(s_1\oplus s_2)=\nabla_{1v} s_1\oplus \nabla_{2v} s_2.
\]
\begin{proof}[Proof of Lemma 6.1] 
Regardless of involutive structures, if $w,s\in \Gamma(\bC TM)$ and $v,t\in \Gamma(\bC TN)$ satisfy
\begin{gather}
f_*w=f^*v,\quad f_*s=f^*t, \qquad\text{then }\\
f_*[w,s]=f^*[v,t].
\end{gather} 
Indeed, if $U\subset M$ is open and $\varphi$ is a smooth function on a neighborhood of $f(U)$, 
\[wsf^*\varphi=w(f_*s)\varphi=w(f^*t)\varphi=wf^*(t\varphi)=(f_*w)(t\varphi)=f^*(vt\varphi)\]
and similarily for $swf^*\varphi$. Hence
\[
(f_*[w,s])\varphi=[w,s]f^*\varphi=f^*([v,t]\varphi)=(f^*[v,t])\varphi,
\]
and (6.4) follows.

In the set up of Lemma 6.1 suppose first that $f:M\to N$ is a direct embedding. At the price of shrinking we can find 
$v\in\Gamma(\oP)$, $t\in\Gamma(\bC TN)$ such that (6.3) holds. Since $\nabla_v t'=[v,t]'$, cf. (5.8), by (5.5) and (6.4) at any $x\in N$
\[\nabla_{w(x)}^ff_*s'=\nabla^f_{w(x)}f^*t'=f^*\nabla_{v(f(x))}t'=f^*[v,t]'(x)=f_*[w,s]'(x).\]

Second, if $f$ is arbitrary, consider the involutive map 
\[g:M\ni x\mapsto \big(x,f(x)\big)\in M\times N,\]
a direct embedding. If the Bott connections on $Q'\to (M,Q), P'\to (N,P)$ are denoted $\nabla^M$ and $\nabla$, the Bott connection on $(M,Q)\times(N,P)$ is $\nabla^M\boxplus\nabla$. Its pullback by $g$ is $\nabla^g=\nabla^M\oplus\nabla^f$. By what we have proved, 
\[\nabla^M_w s'\oplus\nabla^f_w s'=\nabla^g_w g_*s'=g_*[w,s]'=[w,s]'\oplus f_*[w,s]'.\]
Hence (6.1) follows.
\end{proof}   

\begin{thm} 
Let $(M,Q), (N,P)$ be involutive manifolds, $f:\bR\times M\to N$ a smooth map. If $f_\lam=f(\lam,\cdot):(M,Q)\to (N,P)$ is involutive for all $\lam\in\bR$, then $\partial f/\partial\lam\in\Gamma(f_\lam^* TN)$ satisfies
\[
\nabla_v^{f_\lam}(\frac{\partial f_\lam}{\partial\lam})'=0 \qquad\text{for all}\quad v\in\oQ.
\]
\end{thm}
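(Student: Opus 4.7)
The strategy is to absorb the parameter $\lambda$ into the source by working on the product $\widetilde M = \bR\times M$, endowed with a cleverly chosen involutive structure $\widetilde Q$ under which $f:\widetilde M\to N$ itself becomes an involutive map. Lemma 6.1 applied to this single map $f$, with carefully chosen sections $w$ and $s$, will then immediately yield the desired vanishing.

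Concretely, define $\widetilde Q\subset\bC T\widetilde M$ by
\[
\widetilde Q_{(\lambda,x)}=\{0\}\oplus Q_x\subset \bC T_\lambda\bR\oplus\bC T_xM\approx \bC T_{(\lambda,x)}\widetilde M.
\]
A routine computation in product coordinates shows $\widetilde Q$ is involutive: any two sections of $\widetilde Q$ have only $\partial/\partial x^i$ components, and their Lie bracket inherits this form and reduces fiberwise to a bracket of $Q$-sections. For a vector $(0,u)\in\widetilde Q_{(\lambda,x)}$ one has $f_*(0,u)=(f_\lambda)_*u\in P_{f(\lambda,x)}$ because $f_\lambda$ is involutive, so $f:(\widetilde M,\widetilde Q)\to(N,P)$ is itself involutive.

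Now fix $x_0\in M$, $\lambda_0\in\bR$, and $v\in\bar Q_{x_0}$; extend $v$ to a smooth local section of $\bar Q$ near $x_0$, and define $w\in\Gamma(\bar{\widetilde Q})$ by $w(\lambda,x)=(0,v(x))$, independent of $\lambda$. Let $s=\partial/\partial\lambda\in\Gamma(\bC T\widetilde M)$. Since $w$ has vanishing $\partial/\partial\lambda$ component and its $M$-component does not depend on $\lambda$, a coordinate calculation gives $[w,s]=0$. Lemma 6.1 then yields
\[
\nabla^f_w(f_*s)'=f_*[w,s]'=0,
\]
and since $f_*s=\partial f/\partial\lambda\in\Gamma(f^*TN)$, this says $\nabla^f_w(\partial f/\partial\lambda)'=0$ throughout $\widetilde M$.

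It remains to identify this with the statement of the theorem by pulling back along the involutive embedding $\iota_{\lambda_0}:(M,Q)\to(\widetilde M,\widetilde Q)$, $x\mapsto(\lambda_0,x)$. Since $f\circ\iota_{\lambda_0}=f_{\lambda_0}$, transitivity of pullback connections gives $\iota_{\lambda_0}^*\nabla^f=\nabla^{f_{\lambda_0}}$; further, $(\iota_{\lambda_0})_*v=(0,v)=w(\lambda_0,x_0)$ and $\iota_{\lambda_0}^*(\partial f/\partial\lambda)=\partial f_\lambda/\partial\lambda|_{\lambda=\lambda_0}$. Applying the naturality formula (5.5) to the vanishing identity above then produces the claimed equality at the arbitrary point $x_0$. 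The only step requiring attention is constructing $\widetilde Q$ and verifying involutivity of both $\widetilde Q$ and of $f$ with respect to it; once this is done the theorem reduces mechanically to Lemma 6.1 via the bracket $[w,s]=0$ and the naturality of pullbacks.
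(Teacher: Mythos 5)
Your proposal is correct and follows essentially the same route as the paper: the paper likewise forms the product involutive manifold $(\bR\times M,R)$ with $R=(0)\oplus Q$, lifts $v$ to $\omega=(0,v)$ and $\partial/\partial\lam$ to $\partial_\bR$, applies Lemma 6.1 with $[\omega,\partial_\bR]=0$, and then restricts to the slice $\{\lam\}\times M$ via the pullback formula (5.5). The only cosmetic difference is that you verify involutivity of the product structure by hand, whereas the paper invokes the product construction already set up earlier in Section 6.
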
  

\begin{proof}
We view $\bR$ as an involutive manifold with its involutive structure the rank zero subbundle of $\bC T\bR$. Its product with $(M,Q)$ is an involutive manifold $(\bR\times M, R)$; 
\[f:(\bR\times M, R)\to (N,P)\]
is involutive, as are the projections $\text{pr}_\bR:\bR\times M\to \bR$ and $\text{pr}_M:\bR\times M\to M$. With $\lam$ 
the coordinate on $\bR$, we lift $\partial/\partial\lam\in \Gamma(T\bR)$ to the vector field 
$\partial_\bR=(\partial/\partial\lam,0)$ 
on $\bR\times M$. Similarly, given a section $w$ of $\oQ$ 
over some open $U\subset M$, we lift it to the section $\omega=(0,w)$ of $\oR|\bR\times U$. Thus 
$[\omega,\partial_\bR]=0$. By Lemma 6.1
\[\nabla^f_\omega(f_*\partial_\bR)'=f_*[\omega,\partial_\bR]'=0.\]
Restricting to the fibers $\{\lam\}\times M$ gives the theorem. In greater detail, for fixed 
$\lam\in\bR$ let $\var:M\ni x\mapsto (\lam,x)\in \bR\times M$.  Thus $f_\lam=f\circ \var$ and 
$\partial f_\lam/ \partial\lam=\var^*f_*\partial_\bR$; furthermore pullback by $f_\lam$ is the composition of pullbacks by
$f$ and $\var$. Applying (5.5) with $\nabla=\nabla^f$ and $g=\var$, 
\[\nabla^{f_\lam}_v(\frac{\partial f_\lam}{\partial\lam})'=\var^*\nabla^{f}_{\var_* v}(f_*\partial_\bR)'=0, \quad v\in\oQ.\]
\end{proof}

\section{Uniqueness of adapted involutive structures}

Suppose a Lie monoid $G$ acts on the right on a smooth manifold $N$, and is also endowed with an involutive structure $S$. If $N$ admits an 
involutive structure adapted to the action of $(G,S)$, is this structure unique?

In general one cannot expect a positive answer. For example, if $S$ is of rank 0, any $(N,P)$ is adapted to it. 
More to the point, if the orbits of $G$ foliate $N$, then for $P$ to be adapted may constrain what it can 
be in the direction of the leaves,  but places little restriction on what it can be in transverse directions.
Nonetheless, for certain actions on spaces of geodesics uniqueness has already been proved under suitable
assumptions in \cite{LSz1, LSz2, Sz4, Sz5}; there the orbits did not foliate. In this section we prove two abstract 
uniqueness theorems that apply to actions with rather special properties, generalizing the
uniqueness theorems above and also applying to spaces of harmonic maps. Both theorems involve assumptions on the action,
on $S$, and on $P$ as well; but the most important assumption is that the action of $G$ extends to  an action
of a certain space $\hg$, and $g\in\hg\setminus G$ act by degenerate maps. When $N$ is a space of geodesics and $G$ acts by affine
reparametrizations $t\mapsto a+bt$, $\hg$ will contain non-invertible maps $t\mapsto a$; when $N$ is a space of harmonic maps from a convex
domain $D\subset\bC$, and $G$ is a submonoid of $\psl_2(\bC)$, $\hg$ again will contain constant self maps $D\to D$. Both situations are
covered by the following

\begin{defn}  
A degeneration of a right action of a Lie monoid $G$ on a manifold $N$ is given by a topological space $\hg\supset G$ and a continuation of the action to a continuous map 
\begin{equation}  
N\times\hg\ni (x,g)\mapsto xg\in N.
\end{equation}   
Writing $xg=A_gx=\Omega_xg$, we require that $A_g$ be differentiable for each $g\in\hg$;
$$
\hg\ni g\mapsto A_{g*}|\bC T_xN\in\Hom(\bC T_xN,\bC T_{xg}N),
$$
as a section of the Banach bundle $\Hom(\bC T_xN,\Omega^*_x\bC TN)$, be continuous; and
for any $g\in\hg\setminus G$ the map $A_g$ be a submersion on a fixed submanifold $M\subset N$.
\end{defn}  

The assumption that we will make on the involutive structure $S$ has to do with unique continuation. Quite generally, let 
$(M,Q)$ be an involutive manifold. A smooth function $f:(M,Q)\to (\bC, T^{10}\bC)$ is involutive if and only if $vf=0$ for all 
$v\in\oQ$. Indeed, decomposing the holomorphic coordinate on $\bC$ as $z=x+iy$, and writing $f=f_1+if_2$ accordingly, for 
any $v\in \bC TM$\[f_*v=(vf_1)\frac{\partial}{\partial x}+(vf_2)\frac{\partial}{\partial y}=
(vf)\frac{\partial}{\partial z}+(v\of)\frac{\partial}{\partial \oz},\]
whence the claim follows. In Treves' book \cite{Tr} such functions are called ``solutions'' of the involutive structure. The same 
characterization holds for involutive maps $f:M\to V$ into a complex Banach space, when the involutive structure of $V$ is 
$T^{10}V$. 

In the next definition $(G,S)$ will be an involutive Lie monoid, a subspace of a topological space $\hg$. The Lie monoid is also a subset of a manifold $X$ (cf. Definition 2.1), and $\In G$ will refer to interior relative to $X$.

\begin{defn}  
Suppose $(G,S)$ is an involutive Lie monoid, $G$ a subspace of a topological space $\hat G$. We say that an open 
$U\subset\hat G$ is a set of uniqueness if a continuous function $f:U\to\bC$ that is involutive on $U\cap \In G$ and vanishes on 
$U\setminus G$ must be identically 0.   
\end{defn}  

For example, if $X\subset\bC$ is open, $G\subset X$,  $\In G$ is a Jordan domain, $S=T^{10}G$, and $\hg\setminus G$ is
an arc on the boundary of $G$, then any connected open $U\subset\hg$ that intersects $\hg\setminus G$ is of 
uniqueness.---In the definition we could replace $\bC$ valued functions by Banach
space valued functions, because Banach valued functions can be reduced to $\bC$ valued functions by composing them with linear forms.

\begin{thm}  
Suppose a Lie monoid $G$ acts on the right on a smooth manifold $N$ and the action has a degeneration 
\begin{equation}  
N\times\hg\ni(x,g)\mapsto xg=A_g x=\Omega_xg \in N.
\end{equation}   

Let $S$ be any involutive structure on $G$. Fix $x\in N$ and a closed subspace $\Xi\subset \bC T_xN$. There is at most one subspace $P_x\subset \bC T_x N$ which can be continued to an involutive structure $P\subset \bC TN$ adapted to the action of $(G, S)$ that has the following property: There is an open set $U\subset\hg$ of uniqueness containing $e\in G$ such that
\[
\bC T_xN=\Xi\oplus( \text{Ker }{A_{g*}})_x\quad \text{for } g\in U\setminus G,
\]
and the composition
\[\Xi \xrightarrow {A_{g*}}\bC T_{xg}N\xrightarrow {\text{pr}} P'_{xg}\]
is an isomorphism for $g\in U$. (The second map here is projection.)
\end{thm}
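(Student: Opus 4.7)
The plan is to encode each candidate $P_x$ via a family of $\Xi$-valued functions on $\hg$ and to exploit the uniqueness property. Suppose $P^1,P^2$ are two adapted structures satisfying the stated property, with corresponding open sets of uniqueness $U^1,U^2$. By shrinking, I fix a common open $V\subset U^1\cap U^2$ containing $e$ on which (i) $V$ is a set of uniqueness, (ii) the iso $\Xi\to (P^j)'_{xg}$ holds for $j=1,2$ and all $g\in V$, and (iii) the splitting $\bC T_xN=\Xi\oplus\Ker A_{g*}$ holds for $g\in V\setminus G$. For $j=1,2$ and $\zeta\in\bC T_xN$, define $\alpha^j(g,\zeta)\in\Xi$ to be the unique preimage of $(A_{g*}\zeta)'\in (P^j)'_{xg}$ under $T^j(g):=(A_{g*}\cdot)'|_\Xi$. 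At $g=e$ the map $\alpha^j(e,\cdot)\colon\bC T_xN\to\Xi$ is the projection onto $\Xi$ along $\oP^j_x$, so proving $\alpha^1(e,\cdot)=\alpha^2(e,\cdot)$ will yield $\oP^1_x=\oP^2_x$ and hence $P^1_x=P^2_x$.

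The key step---the main obstacle---is to show that $\alpha^j(\cdot,\zeta)\colon V\to\Xi$ is continuous on $V$ and involutive on $V\cap\In G$. Continuity follows from continuity of $g\mapsto A_{g*}$ and the openness of the iso condition (which makes $(T^j)^{-1}$ continuous on $V$). For involutivity I invoke Theorem 6.2: fix $\xi\in T_xN$, pick a curve $\gamma\colon\bR\to N$ with $\gamma(0)=x$, $\dot\gamma(0)=\xi$, and set $f_\lam(g)=\gamma(\lam)g$. Adaptedness of $P^j$ makes each $f_\lam\colon(\In G,S)\to(N,P^j)$ involutive, and Theorem 6.2 yields $\nabla^{\Omega_x}_v(A_{g*}\xi)'=0$ for $v\in\oS$, where $\nabla^{\Omega_x}$ denotes the pullback via $\Omega_x$ of the Bott connection on $(P^j)'$. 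Extending $\bC$-linearly in $\xi$ gives the same equation for all $\zeta\in\bC T_xN$. Specializing to $\xi\in\Xi$ shows that $T^j$ is a parallel section of $\Hom(\Xi,\Omega_x^*(P^j)')$ along $\oS$, hence so is $(T^j)^{-1}$ on $V\cap\In G$. Differentiating the identity $T^j(g)\alpha^j(g,\zeta)=(A_{g*}\zeta)'$ along $v\in\oS$---with both $T^j$ and the right-hand side parallel, and $\Xi$ a trivial target bundle---gives $T^j(g)(v\alpha^j(\cdot,\zeta))=0$; invertibility of $T^j(g)$ forces $v\alpha^j(\cdot,\zeta)=0$ for all $v\in\oS$, which is exactly involutivity of the $\Xi$-valued function $\alpha^j(\cdot,\zeta)$.

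Finally, for $g\in V\setminus G$ the splitting $\zeta=\xi_g+\eta_g$ (with $\xi_g\in\Xi$, $\eta_g\in\Ker A_{g*}$) yields $A_{g*}\zeta=A_{g*}\xi_g$, so $\alpha^j(g,\zeta)=\xi_g$ does not depend on $j$. Thus $\beta(g):=\alpha^1(g,\zeta)-\alpha^2(g,\zeta)$ is continuous on $V$, involutive on $V\cap\In G$, and vanishes identically on $V\setminus G$. The uniqueness property of $V$ (extended to $\Xi$-valued functions by composition with continuous linear forms, as noted after Definition 7.2) forces $\beta\equiv 0$ on $V$, and evaluating at $g=e$ gives $\alpha^1(e,\zeta)=\alpha^2(e,\zeta)$ for every $\zeta$, as required. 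The one point meriting care outside the main argument is the availability of a common $V$ satisfying (i)--(iii); in the degenerations of interest---where $\hg\setminus G$ is a sufficiently regular piece of $\partial G$---any small enough connected open neighborhood of $e$ meeting $\hg\setminus G$ is itself a uniqueness set, so the reduction is routine.
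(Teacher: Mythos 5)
Your proposal is correct and follows essentially the same route as the paper: the paper's map $\Phi(g)=\bigl(A'(g)|\Xi\bigr)^{-1}A'(g)$ is exactly your $\alpha^j(g,\cdot)$, its involutivity is likewise deduced from Theorem 6.2 together with linearity of the Bott connection, its restriction to $U\setminus G$ is likewise computed from the splitting $\bC T_xN=\Xi\oplus\Ker A_{g*}$ alone, and the set-of-uniqueness hypothesis then pins down $\Phi(e)$, whose kernel is $\oP_x$. The only divergence is presentational: the paper reconstructs $\oP_x$ from $\Xi$, $U$, $(G,S)$ and the action rather than comparing two candidate structures directly, so the common-$V$ reduction you flag at the end does not surface there explicitly --- though the corresponding point (independence of the reconstruction from the choice of $U$) is dispatched just as briefly in the paper, so your caveat is a fair one rather than a defect of your argument alone.
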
   

As we shall see, in certain situations---e.g. when $N$ is a space of geodesics, see Theorem 7.5---$\Xi\subset\bC T_xN$ as in the theorem can be found for a dense set of $x\in N$, and for the uniqueness of $P_x$ it is not even necessary to know what $\Xi$ is. In such a case the adapted $P$ is simply determined by the action of $(G,S)$. 

In what follows, we write
\[
A'(g)=\text{pr}\circ A_{g*}: \bC T_xN\to P'.
\]

\begin{proof} [Proof of Theorem 7.3] 
We will reconstruct $P_x$ from the extended action (7.2). Let $M\subset N$ be as in Definition 7.1 and $P$ as in the 
theorem. Since $A_{g*}|\Xi$ is an isomorphism on $\bC T_{xg} M$ when $g\in U\setminus G$, 
$\varphi(g)=(A_{g*}|\Xi)^{-1}A_{g*}$ defines a map $\varphi :U\setminus G\to \Hom(\bC T_xN,\Xi)$ such that
\begin{equation}  
A_{g*}\zeta=A_{g*}\varphi(g)\zeta, \quad g\in U\setminus G,\,\,  \zeta\in \bC T_x N.
\end{equation}  
Note that $\varphi$ is determined by the action (7.2) and $\Xi$, and of course the choice of $U$. 

Similarly, $\Phi(g)=(A'(g)|\Xi)^{-1}A'(g)$ defines a continuous map $\Phi: U\to \Hom(\bC T_xN, \Xi)$, smooth over $U\cap \In G$ such that
\begin{equation} 
A'(g)\zeta=A'(g)\Phi(g)\zeta, \quad g\in U,\,\, \zeta\in \bC T_xN.
\end{equation}  
Unlike $\varphi$, the action itself does not immediately determine $\Phi$, since its definition involves $A'(g)$, which depends on the unknown
$P$. However, by (7.3)  $A'(g)\zeta=A'(g)\varphi(g)\zeta$ for $g\in U\setminus G$. As $\Xi\cap \Ker \,A'(g)=(0)$, together with (7.4) this 
implies
\begin{equation} 
\Phi(g)=\varphi(g) \quad \text{for }  g\in U\setminus G.
\end{equation}  
Furthermore,
$$
\Ker\,\Phi(e)=\Ker\, A'(e)=\oP_x.
$$

Next we show that $\Phi\zeta:U\cap \In G\to \Xi$ is involutive for all $\zeta\in \bC T_x N$ (which is the same as the involutivity of $\Phi$ itself). Let $\bR\ni\lam\mapsto x_\lam\in N$ be a smooth curve such that 
$\zeta=dx_\lam/d\lam \big |_{\lam=0}$, and let $f_\lam=\Omega_{x_\lam}|U\cap\In G$. Note that 
$f_\lam:(U\cap\In G,S)\to (N,P)$ is involutive (because $P$ is adapted), $f_0(g)=A_gx$, and 
$\partial f_\lam(g)/\partial\lam\big |_{\lam=0}=A_{g*}\zeta$.  By Theorem 6.2 
$\nabla^{f_\lam}(\partial f_\lam/\partial\lam)'=0$; setting $\lam=0$ we see that the section
\[
U\cap\In G\ni g\mapsto A'(g)\zeta\in P'_{xg}
\]
of the bundle $f_0^*P'$ is annihilated by $\nabla^{f_0}$. This also applies with $\zeta$ replaced by $\Phi(g_0)\zeta$, 
where $g_0$ is fixed, i.e., 
\[s(g)=A'(g)\Phi(g)\zeta=A'(g)\zeta \qquad \text{and }\qquad t(g)=A'(g)\Phi(g_0)\zeta\]
satisfy $\nabla^{f_0}s=\nabla^{f_0}t=0$, or $\nabla^{f_0}(s-t)=0$. Since $s-t=0$ at $g_0$, the expression (5.4)
of a partial connection in a local trivialization therefore gives
\[0=
A'(g_0)v(\Phi\zeta-\Phi(g_0)\zeta)=A'(g_0)v(\Phi\zeta), \quad v\in \oS_{g_0}.
\]
Both $\Phi\zeta$ and $v(\Phi\zeta)$ take values in $\Xi$, and the restriction of $A'(g_0)$ to $\Xi$ is an isomorphism. Hence $v(\Phi\zeta)=0$ for $v\in\oS_{g_0}$, and $\Phi\zeta$ is indeed involutive.

As $U$ is a set of uniqueness, this and (7.5) imply that $\Phi$ is uniquely determined by $\varphi$. Thus
$\oP_x=\Ker\,\Phi(e)$  can be reconstructed,
through $\varphi$, from $\Xi$, $(G,S)$, the action (7.2), and $U$. Of course, the reconstructed $\oP_x$ is independent of the choice of $\Xi,U$ satisfying the assumptions. Therefore $P_x$ is indeed unique.
\end{proof}

At first glance the assumptions of the theorem are problematic, since they involve a seemingly artificial property of the unknown involutive 
structure $P$. In fact, the relevant assumption can be derived from crude properties of the $G$ action and of $P$ (its co-rank), plus unique
continuation results in involutive structures, as we next illustrate. In various situations connected open neighborhoods of continuous curves
in $\hg$ are of uniqueness, and the isomorphism  condition in Theorem 7.3 is satisfied by generic $\Xi$ if the neighborhood is sufficiently
thin. This is what we need:

\begin{prop} 
Let $V$ be a complex Banach space, $E\to (0,1)$ a smooth complex Banach bundle, $F\subset E$ a smooth subbundle, and $\alpha_t: V\to E_t$ a smooth family of continuous monomorphisms, $0<t<1$. Given $m\in\bN$, if
\begin{equation} 
\dim \alpha_t(V)/(F\cap \alpha_t(V))\ge m\quad \text{for all } t\in(0,1),
\end{equation}  
then for a generic $Z$ in the Grassmannian ${\rm{Gr}}={\rm{Gr}} (V,m)$ of $m$ dimensional subspaces of $V$
\[\alpha_t(Z)\cap F_t=(0) \quad \text{for all } t\in (0,1).\]
\end{prop}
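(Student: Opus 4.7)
The plan is a Baire category argument on $\mathrm{Gr}(V,m)$. Since $F$ is a smooth Banach subbundle of $E$, over each sufficiently small interval $J \subset (0,1)$ I can split $E|_J = F|_J \oplus F'$ with $F'$ a smooth complementary subbundle; let $\rho_t \colon E_t \to F'_t$ be the projection and $f'_t = \rho_t \circ \alpha_t \colon V \to F'_t$. Then $\alpha_t^{-1}(F_t) = \ker f'_t$, the hypothesis (7.6) translates to $\dim\mathrm{Im}\, f'_t \ge m$, and $\alpha_t(Z) \cap F_t = (0)$ iff $f'_t|_Z$ is injective.

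For each $t_0 \in (0,1)$, I choose $v_1, \dots, v_m \in V$ with $f'_{t_0}(v_i)$ linearly independent; by continuity this persists on some compact interval $K \ni t_0$ inside a trivializing $J$. Cover $(0,1)$ by countably many such compact intervals $K_n$ with associated tuples $v^n_1, \dots, v^n_m$, and define
\[
\Omega_n = \{Z \in \mathrm{Gr}(V,m) : f'_t|_Z \text{ is injective for all } t \in K_n\}.
\]
If each $\Omega_n$ is open and dense, then $\bigcap_n \Omega_n$ is a dense $G_\delta$ of $Z$'s with the desired property. Openness is routine: $\psi(Z,t) = \inf_{z \in Z, \|z\|=1} \|f'_t(z)\|$ is jointly continuous (unit spheres in finite-dimensional $Z$ are compact), so $Z \in \Omega_n$ iff $\min_{t \in K_n} \psi(Z,t) > 0$, an open condition.

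The substantive step is density. Given $Z_0 = \mathrm{span}(z_1, \dots, z_m)$, set $Z_\lambda = \mathrm{span}(z_i + \lambda v_i^n)$ for $\lambda \in \bC$; then $Z_\lambda \to Z_0$ as $\lambda \to 0$. Injectivity of $f'_t|_{Z_\lambda}$ is injectivity of the linear pencil $A_t + \lambda B_t \colon \bC^m \to F'_t$ with $A_t e_i = f'_t(z_i)$ and $B_t e_i = f'_t(v_i^n)$. Since $B_t$ is injective on $K_n$, a continuously varying left inverse $\pi_t \colon F'_t \to \bC^m$ of $B_t$ (obtained via a smoothly varying dual-basis construction, patched by partition of unity on the compact $K_n$) reduces non-injectivity of the pencil to singularity of $\pi_t A_t + \lambda I$ on $\bC^m$; hence the bad $\lambda$'s at parameter $t$ lie among the roots of the characteristic polynomial $p_t(\lambda) = \det(\lambda I + \pi_t A_t)$, of degree $m$ and with smoothly varying coefficients.

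Consequently the bad set $\Lambda_n = \{\lambda : p_t(\lambda) = 0 \text{ for some } t \in K_n\}$ is the projection to $\bC$ of the zero locus $S \subset K_n \times \bC$ of $(t,\lambda) \mapsto p_t(\lambda)$. At regular points (where $\partial p_t/\partial\lambda \ne 0$), the implicit function theorem presents $S$ locally as a smooth graph $\lambda = \lambda(t)$, and the multiple-root locus is lower-dimensional; so $S$ is essentially a finite union of smooth curves, making $\Lambda_n$ a $1$-real-dimensional subset of $\bC$, hence nowhere dense. Its complement therefore contains $\lambda$'s of arbitrarily small modulus, and for such $\lambda$ the subspace $Z_\lambda \in \Omega_n$ lies arbitrarily close to $Z_0$. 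The main obstacle is precisely this density step: one must coordinate the avoidance of all $G_t = \ker f'_t$ simultaneously for $t$ ranging over a whole interval, which succeeds only because the finite-dimensionality of $\bC^m$ restricts the bad $\lambda$'s at each $t$ to at most $m$ points, keeping the total $1$-dimensional sweep across $t \in K_n$ too thin to fill an open subset of $\bC$.
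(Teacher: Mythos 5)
Your argument is correct and follows essentially the same route as the paper: a Baire category argument over a countable cover of $(0,1)$ by compact intervals, with density obtained by perturbing an arbitrary $Z_0$ along a good frame through a one-complex-parameter pencil whose degree-$m$ determinant has, over each compact $t$-interval, a zero set of planar measure zero (the paper enlarges $F$ to corank $m$ and writes the pencil as $s\,\mathrm{Id}+L$ on a good subspace, whereas you keep an arbitrary complement and compose with a left inverse of the good frame, but this is a cosmetic difference). The one place to tighten is the assertion that the multiple-root locus of $p_t$ is ``lower-dimensional'': since $p_t$ is monic of degree exactly $m$, every root of multiplicity $k$ is a simple root of $\partial^{k-1}p_t/\partial\lambda^{k-1}$, so the whole bad set lies in a finite union of loci each of which is, by the implicit function theorem, a countable union of smooth plane curves --- which is precisely how the paper's formula (7.8) disposes of this point, and is needed because a mere ``one-dimensional sweep of at most $m$ roots'' could a priori have positive planar measure, roots being only H\"older in the coefficients.
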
 
The Grassmannian in question is the image of the space GL$(V)$ of invertible endomorphisms of $V$ under the map that associates with $A\in \text{GL}(V)$ the image $AW$ of a fixed $m$ dimensional subspace $W\subset V$. The norm topology on $\text{GL}(V)$ induces a completely metrizable topology on Gr, and generic means `in a dense $G_\delta$ set'.

\begin{proof}
It suffices to show that given $t\in(0,1)$, there is an $\var>0$ such that
\[\cZ_\var=\{Z\in\text{Gr}:\alpha_\tau(Z)\cap F_\tau=(0) \quad\text{if } |\tau-t|\le\var\}\subset \text{Gr}\]
is open and dense. This we will show under the sole assumption that (7.6) holds for the $t$ in question. For this reason we can assume, at the price of restricting to a neighborhood of $t$ and enlarging $F$, that $E=F\oplus T$ with a trivial bundle $T$ of rank $m$.

Let $\varrho: F\oplus T\to T$ denote the projection, so that $Z\in\cZ_\var$ if and only if $\varrho\alpha_\tau: Z\to T_\tau$ 
is an isomorphism,
or $\text{det}(\varrho\alpha_\tau| Z)\ne 0$, for $|\tau-t|\le\var$. The determinant is computed in a fixed basis of $Z$ and a fixed frame of
$T$. When $\var<t,1-t$, the latter condition defines an open set $\cZ_\var\subset \text{Gr}$. By (7.6) there is a
$Z_0\in \text{Gr}$ such that $F_t\cap\alpha_t(Z_0)=(0)$, i.e., det$(\varrho\alpha_t | Z_0)\ne 0$. Choose $\var>0$ so small that
\begin{equation} 
\text{det}(\varrho\alpha_\tau | Z_0)\ne 0\quad \text{for } |\tau-t|\le\var.
\end{equation}  
We claim that this implies that $\cZ_\var$ is dense in Gr.

Indeed, let $W\in\text{Gr}$. With a suitable linear $L:Z_0\to V$
\[
W=\{\zeta+L\zeta: \zeta\in Z_0\}. 
\]
The function
\[p_\tau(s)=\det \varrho\alpha_\tau(s\text{Id}_{Z_0} +L)=\sum\limits^m_0 a_j(\tau){s^j}, \quad |\tau-t|\le\var,\quad s\in\bC,\]
is a polynomial in $s$ of degree exactly $m$ (by (7.7)), whose coefficients are smooth functions of $\tau$. Note that
\begin{equation}
\begin{aligned} 
\{s\in\bC &: p_\tau(s)=0\quad \text{for some } \tau\}\subset\\ &\bigcup\limits^m_{j=0}\{s\in \bC \text{ is a simple zero of  } \partial^j  {p_\tau}(s)/\partial s^j \text{ for some } \tau\}.
\end{aligned}  
\end{equation}
By the implicit function theorem, each set in this union is a countable union of smooth plane curves. It follows that the set 
(7.8) has zero planar measure, and arbitrarily close to 1 there are $s$ such that $p_\tau(s)\ne 0$ when $|\tau-t|\le\var$. 
But then the subspaces
\[
W_s=\{s\zeta+L\zeta:\zeta\in Z_0\}
\]
are in $\cZ_\var$ and arbitrarily close to $W$. We conclude that $\cZ_\var$ is indeed open and dense.
\end{proof}

Combining Theorem 7.3 and Proposition 7.4 we can prove uniqueness of adapted involutive structures on spaces of geodesics:

\begin{thm}  
Consider a smooth manifold $M$ of dimension $m<\infty$ endowed with a smooth linear connection $\nabla$ on its tangent bundle and, given $r\in(0,\infty)$, the $2m$ dimensional manifold $N_0$ of its geodesics $x:[-r,r]\to M$. By associating with $p\in M$ the constant geodesic $x\equiv p$, we can view $M$ as a submanifold of $N_0$. The Lie monoid $G$ of affine maps
\[ [-r,r]\ni t\mapsto a+bt\in [-r,r], \quad a,b\in \bR,\, b>0, \, |a|+br\le r,\]
acts on $N_0$ by reparametrizations. For $c\in\bC$ let $S(c)$ be (the restriction of)  the involutive structure on $G$
of Example 3.2. Suppose $c$ is not real or else is in $(-r, r)$. 

If an open $G$ invariant  neighborhood $N$ of $M\subset N_0$ admits an involutive structure $P$ adapted to the action of $(G, S(c))$ and $rk\,P\le m$, then $P$ is unique.
\end{thm}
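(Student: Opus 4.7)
The plan is to apply Theorem 7.3 pointwise at each non-constant geodesic $x\in N\setminus M$; since $N\setminus M$ is dense in $N$ and $P$ is continuous, uniqueness at such points extends to all of $N$. I concentrate on the main case $\rk P=m$; the sub-case $\rk P<m$ would require a separate argument since the dimension balance in Theorem 7.3 forces $\dim\Xi=m$. To set up the degeneration, take $\hg\subset\bR^2$ to be the closed triangle $\{(a,b):b\ge 0,\,|a|+br\le r\}$, with $G$ its $b>0$ portion and $\hg\setminus G$ the edge $\{(a,0):|a|\le r\}$ of constant affine maps; the action extends continuously by letting $(a,0)$ send $x$ to the constant geodesic at $x(a)$, and $A_g|M=\id_M$ for such $g$, so Definition 7.1 is satisfied with the submanifold $M\subset N$. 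Any connected open $U\subset\hg$ meeting $\{b=0\}$ is a set of uniqueness: for $c\notin\bR$, the coordinate $z=a+cb$ realizes $S(c)$ as a complex structure, involutive functions are holomorphic, and vanishing on a real subarc of $\{b=0\}$ forces them to vanish by the identity theorem; for $c\in(-r,r)$, $S(c)$ defines a foliation whose every leaf in $\hg$ meets $\{b=0\}$, and leafwise-constant functions vanishing on that edge vanish throughout.

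Next fix $x\in N\setminus M$ and a smooth path $\gamma\colon[0,1]\to\hg$ from $\gamma(0)=e$ to $\gamma(1)=(a_1,0)$ with $|a_1|<r$ and $\gamma((0,1))\subset\In G$ (concretely $\gamma(t)=(a_1t,\,1-t)$ works). Setting $V=\bC T_xN$, $\alpha_t=A_{\gamma(t)*}$ (an isomorphism for $t\in(0,1)$), and $F_t=\oP_{x\gamma(t)}$, the quotients $P'_{x\gamma(t)}=\alpha_t(V)/(\alpha_t(V)\cap F_t)$ have dimension $\rk P=m$, verifying hypothesis (7.6) of Proposition 7.4. The proposition yields a generic $m$-dimensional $\Xi\subset V$ with $\alpha_t(\Xi)\oplus\oP_{x\gamma(t)}=\bC T_{x\gamma(t)}N$ for all $t\in(0,1)$, so $\Xi\to P'_{x\gamma(t)}$ is an isomorphism throughout the open part of the path. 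I further demand $\Xi$ complementary to $\ker A_{\gamma(1)*}$, also a residual condition in the Grassmannian.

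The main obstacle is the isomorphism $\Xi\to P'_{x\gamma(1)}$ at the degenerate endpoint. Since $\Xi\cap\ker A_{\gamma(1)*}=0$ and $\dim\Xi=m$, $A_{\gamma(1)*}|\Xi$ is an isomorphism onto the horizontal subspace $\bC T_{x(a_1)}M\subset\bC T_{x\gamma(1)}N$, and the claimed isomorphism reduces to $\bC T_{x(a_1)}M\oplus\oP_{x\gamma(1)}=\bC T_{x\gamma(1)}N$. I claim this holds at every $p\in M$. Adaptedness implies that $(\Omega_{x'})_*(c\partial_a-\partial_b)=(c-t)\dot{x'}(t)$ lies in $\oP_{x'}$ for every non-constant geodesic $x'$; under the horizontal-vertical splitting of $T_{x'}(TM)$ this Jacobi field corresponds to the tangent vector $(c\dot{x'}(0),\,-\dot{x'}(0))$. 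Letting $x'\to p$ along one-parameter families whose initial velocity directions fill out $T_pM$, the corresponding lines in $\oP_{x'}$ converge in the Grassmannian to lines through $(cv,-v)\in\bC T_pM\oplus\bC T_pM$ that lie in $\oP_p$; their span is $m$-dimensional, hence equals $\oP_p=\{(cv,-v):v\in\bC T_pM\}$, which is complementary to the horizontal $\bC T_pM\oplus\{0\}$. This verifies the hypothesis of Theorem 7.3 with $U$ a thin tube around $\gamma$, giving pointwise uniqueness of $P_x$; continuity of $P$ extends it to all of $N$.
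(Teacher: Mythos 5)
Your strategy is essentially the paper's: degenerate the action to the edge $\{b=0\}$ of constant maps, compute $P$ along $M$ from adaptedness plus $\text{rk}\,P\le m$ to get transversality of $\oP_p$ with $\bC T_pM$, apply Proposition 7.4 along a segment from $e$ to the edge to get a generic $\Xi$, and feed a thin tube into Theorem 7.3. For $c\notin\bR$ your argument goes through. But there is a genuine gap in the case $c\in(-r,r)$: your claim that \emph{every} connected open $U\subset\hg$ meeting $\{b=0\}$ is a set of uniqueness is false for the foliation $S(c)$. A continuous function on $U$ that is constant along the leaves (lines of direction $(c,-1)$) and vanishes on $U\cap\{b=0\}$ is only forced to vanish on those leaf segments that reach $\{b=0\}$ \emph{inside} $U$; in a thin tube around your segment from $(0,1)$ to $(a_1,0)$ with $a_1\ne c$, leaves entering near $e$ exit through the sides long before reaching $b=0$. (Concretely, for $c=0$ and $a_1\ne 0$ take $f(a,b)=h(a)$ with $h$ supported near $a=0$: it is involutive, vanishes on $U\setminus G$, and is not identically zero.) The paper avoids this by choosing the degenerate endpoint to be $g_{c0}=(c,0)$, so that the segment $\Gamma$ is parallel to the leaves and a narrow parallel strip around its line is a union of leaf segments each crossing $U\setminus G$; your path must be aligned in the same way, and your justification of the uniqueness property must be rewritten accordingly.

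A second, self-acknowledged incompleteness: the theorem assumes only $\text{rk}\,P\le m$ and you prove only the case $\text{rk}\,P=m$. In fact no separate argument is needed, and you already have the ingredients: your computation at $p\in M$ exhibits an $m$-dimensional subspace of $\oP_p$, which together with $\text{rk}\,P\le m$ forces $\text{rk}\,P=m$ on $M$; since the rank of a subbundle of $\bC TN$ over the finite dimensional $N$ is locally constant and every $x\in N$ is joined to $M$ by the connected set $\Omega_x(\hg)\subset N$, the rank is $m$ everywhere. You should close this loop rather than leave the subcase open. Two minor points: what Definition 7.1 asks of $A_{g_{a0}}$ is that it be a submersion onto $M$ (which holds, since a Jacobi field can take any value at $t=a$), not that it restrict to $\id_M$; and the Grassmannian limit $x'\to p$ is unnecessary, since evaluating the adaptedness relation at $g_{a0}$ gives directly $\Omega_{x*}(\oc\partial_a-\partial_b)(t)=(\oc-t)\dot x(a)\in P_{x(a)}$ for every nonconstant geodesic $x$ near the constant one.
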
  

This theorem generalizes \cite[Theorem 2a]{LSz2} in several ways; in particular, it allows non-Riemannian geodesics and 
incomplete connections. (In the context of adapted complex structures, \cite{Sz4} already dealt with this generality
and proved uniqueness.) The 
proof in \cite{LSz2} is by reduction to \cite[Theorem 4.2]{LSz1}. Sz\H{o}ke has lately observed that this reduction works 
only if $P$ is assumed to be a complex structure and not only a polarization (an involutive structure of rank $m$). The 
proof to follow works even if $P$ is not a complex structure. Nonetheless, its main component is Theorem 7.3, whose 
proof is but a variant of the proof of \cite[Theorem 4.2]{LSz1}.

\begin{proof}
We take $\hg$ to consist of affine transformations $g_{ab}: t\mapsto a+bt$ with $b\ge 0$ and $|a|+br\le r$, that we view as a subspace of the $ab$ plane. Then $S(c)$ of Example 3.2 extends to an involutive structure on $\hg$, as the span over $\bC$
\begin{equation} 
\langle\oc\partial_a-\partial_b\rangle=S(c).
\end{equation}  
Reparametrizations by $g\in\hg$ define a degeneration of the action of $G$. We have 
\[
\hg\setminus G=\{g_{a0}: -r\le a\le r\}.
\]
We will check that for every nonconstant geodesic $x\in N\setminus M$ the assumption of Theorem 7.3 on $P$ is satisfied by a generic $\Xi\in\text{Gr}=\text{Gr}(\bC T_xN,m)$. 

If $c\in(-r,r)$, let $\gamma=g_{c0}\in \hg\setminus G$; if $c\in \bC\setminus \bR$, let $\gamma\in \hg\setminus G$ be 
arbitrary. Connect $e=g_{01}$ and $\gamma$ with a line segment $\Gamma\subset \hg$ (line segment when viewed as a 
subset of the plane). No matter what generic $\Xi\in \text{Gr}$ we choose,
any sufficiently narrow parallel strip around the line of $\Gamma$, intersected with $\hg$, will do as $U$ of Theorem 7.3.  
 
To show this, first we compute $P|M$ as follows. For any $x\in N$, $T_x N$ consists of Jacobi fields along $x$. When $x\equiv u\in M$ is a 
constant geodesic, the solutions of Jacobi's equation are affine maps into $T_uM$:
 \[
 T_uN=\{\theta: [-r,r]\to T_uM \text{ is affine}\}.
 \]
There is an analogous description of $\bC T_uN$. For any $x\in N$, from $(\Omega_xg_{ab})(t)=x(a+bt)$ we can compute $\Omega_{x*}$. When $b=0$,
we find
\[\Omega_{x*}(\oc\partial_a-\partial_b)\big |_{g_{a0}} (t)=\theta(t)=(\oc-t)\dot x(a).\]
Since $P$ is adapted, $\theta\in P$, see (7.9). Fix $x(a)=u\in M$. As $\dot x(a)\in T_uM$ varies, the corresponding $\theta$ sweep out an 
$m$ dimensional totally real subspace of $\bC T_uN$, whose complex linear span  is $m$ complex dimensional and 
contained in $P$. But
$\text{rk}\, P\le m$,  so this span is simply $P_u$. Since the tangent space to $M\subset N$ consists of constant maps $\theta:[-r,r]\to T_uM$, the
subspaces $P_u$ and $\bC T_uM$ are transverse in $\bC T_uN$. As $A_{g_{a0}}$ is a submersion on $M$, for any $\Xi\in \text{Gr}$ complementary
to $\bC T_xN\cap \text{Ker} A'{(g_{a0})}$
\begin{equation}   
\Xi\xrightarrow{A_{g*}}\bC T_{xg} N\xrightarrow{\text{pr}}P'_{xg}
\end{equation}  
is an isomorphism for $g=g_{a0}$. Using this with $g_{a0}=\gamma$, Proposition 7.4 then implies that with a generic $\Xi\in \text{Gr}$ (7.10) is an isomorphism  for all $g\in\Gamma$, and by continuity, for $g\in\hg$ in a sufficiently narrow  strip $U$ about the line of $\Gamma$. 

At the same time, this $U$ will be of uniqueness. When $c\notin\bR$, the identification $\hg\ni g_{ab}\leftrightarrow a+bc\in \bC$ turns involutive functions on $\big(G,S(c)\big)$ into holomorphic functions in $\bC$. Uniqueness follows, since for any Jordan domain $D\subset\bC$ any continuous function on $\oD$ that is holomorphic in $D$ and vanishes on a nontrivial arc of $\partial D$ must be identically 0. When $c\in(-r,r)$, involutive functions are constant along lines parallel to $\Gamma$. Hence if they vanish on $U\setminus G$, they vanish identically. 

We have thus verified the assumptions of Theorem 7.3. By that theorem therefore the choice of $\Xi\in\text{Gr}$ determines $P_x$. Although without the a priori knowledge of $P$ we do not know which $\Xi\in\text{Gr}$ satisfies the relevant assumption, it suffices that generic $\Xi$ determine the same $P_x$. In this way, $P_x$ is determined solely by the action of $(G,S)$, without reference to $\Xi$. As this holds for a dense set of $x\in N$, $P$ itself is unique.
\end{proof}

The assumptions of Theorem 7.3 imply $\Xi\approx T_uM$ for any $u\in A_gN$, $g\in U\setminus G$, and also $\Xi\approx P'_x$. For example, if $M$ is finite dimensional, the theorem applies only for involutive structures $P$ of co-rank equal to dim$\, M$. Next we prove an analog in which the co-rank of $P$ can be greater than dim$\,M$.

\begin{thm}   
Suppose a Lie monoid $G$ acts on the right on a smooth manifold $N$ and the action has a degeneration 
\[N\times\hg\ni(x,g)\mapsto xg=A_gx=\Omega_xg\in N.\]
Let $S$ be an involutive structure on $G$, $x\in N$, and fix closed subspaces $\Xi$, $H\subset \bC T_xN$. There is at most one subspace $P_x\subset \bC T_x N$ which can be continued to an involutive structure $P\subset \bC TN$ adapted to the action of $(G, S)$ that satisfies the following. With some open set $U\subset\hg$ of uniqueness containing $e\in G$, in which $U\cap G$ is dense, with some $\chi:U\cap G\to(0,\infty)$ and $\beta(g)\in\Hom(\bC T_xN,A_{g*}\Xi)$ for $g\in U\cap G$,
\begin{equation}
B(g)=\chi(g)\big(A_{g*}-\beta(g)\big):\bC T_xN\to\bC T_{xg}N
\end{equation}
has a limit in the norm topology on operators as $g\to g_0$ for any $g_0\in U\setminus G$. Denoting the limit $B(g_0)$,
\begin{gather}
\bC T_xN=H\oplus B(g)^{-1}(A_{g*}\Xi)\qquad{\text{for any $g\in U\setminus G;\quad$ the operators}}\\
A'(g):\bC T_xN\xrightarrow{A_{g*}}\bC T_{xg}N\to P'_{xg},\qquad g\in U,
\end{gather}
restricted to $\Xi$, define an isomorphism between the trivial bundle $U\times\Xi\to U$ and a subbundle 
$E\subset\Omega_x^*P'|U$; and the operators
\begin{equation}
B'(g):\bC T_xN\xrightarrow{B(g)}\bC T_{xg}N\to\bC T_xN / (\oP_{xg}+A_{g*}\Xi)\approx P'_{xg}/A'(g)\Xi=P'_{xg}/E_g,
\end{equation}
restricted to $H$, define an isomorphism between the trivial bundle $U\times H\to U$ and $(\Omega^*_xP'|U)/E$. The two isomorphisms are topological, and smooth over $U\cap\In G$.
\end{thm}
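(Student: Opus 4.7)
I would follow the blueprint of Theorem~7.3: reconstruct $\oP_x=\Ker A'(e)$ from data computable from the extended action, $\Xi$, $H$, $U$, and $B$, via a continuous operator $\Phi:U\to\Hom(\bC T_xN,\,\Xi\oplus H)$ that is involutive on $U\cap\In G$, action-determined on $U\setminus G$, and satisfies $\Ker\Phi(e)=\oP_x$. The assumption that $U$ is a set of uniqueness then forces $\Phi$, and hence $\oP_x$, to be unique. Specifically, the two isomorphism hypotheses yield for each $g\in U$ a direct-sum decomposition $P'_{xg}=E_g\oplus F_g$, where $F_g\subset P'_{xg}$ is the image of $B(g)H$ in $P'_{xg}$; let $\pi_E(g)$, $\pi_F(g)$ be the corresponding projections, and $L(g):H\to F_g$ (sending $h$ to the image of $B(g)h$) the accompanying isomorphism. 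Define
\[
\Phi_\Xi(g)=(A'(g)|_\Xi)^{-1}\pi_E(g)A'(g), \qquad \Phi_H(g)=L(g)^{-1}\pi_F(g)A'(g),
\]
and $\Phi(g)=(\Phi_\Xi(g),\Phi_H(g))$. Both components are continuous on $U$ and smooth on $U\cap\In G$ by the topological-isomorphism assumptions, and at $g=e$ the definition yields $\Ker\Phi(e)=\Ker A'(e)=\oP_x$.

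\textbf{Action-determined behavior and involutivity.} On $U\setminus G$, the decomposition $\bC T_xN=H\oplus B(g)^{-1}A_{g*}\Xi$ yields for each $\zeta$ a unique $\zeta=\zeta_H+\zeta_K$; injectivity of $A'(g)|_\Xi$ forces injectivity of $A_{g*}|_\Xi$, so there is a unique $\xi(g,\zeta)\in\Xi$ with $B(g)\zeta_K=A_{g*}\xi(g,\zeta)$. Unraveling the definitions and using $A_{g*}=\chi(g)^{-1}B(g)+\beta(g)$ on $U\cap G$ combined with continuity at the boundary, $\Phi(g)$ on $U\setminus G$ is expressible in terms of $\zeta_H$, $\xi(g,\zeta)$, $A_{g*}$ and $B(g)$ alone---so $\Phi|_{U\setminus G}$ is action-determined. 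For involutivity on $U\cap\In G$, the argument parallels Theorem~7.3: the section $s(g):=A'(g)\zeta$ of $\Omega_x^*P'$ is annihilated by the Bott connection $\nabla^{\Omega_x}$ along $\oS$ (apply Theorem~6.2 to the deformation $\lam\mapsto x+\lam\zeta$ read in a chart); for $g_0\in U\cap\In G$ with $(\xi_0,h_0):=\Phi(g_0)\zeta$, the test section $t(g):=A'(g)\xi_0+L(g)h_0$ satisfies $t(g_0)=s(g_0)$. Provided that $t$ is likewise Bott-annihilated, the local expression (5.4) for a partial connection forces $v(s-t)|_{g_0}=0$ for $v\in\oS_{g_0}$; expanding in a trivialization and using injectivity of $A'(g_0)|_\Xi$ and $L(g_0)|_H$ on the direct sum $E_{g_0}\oplus F_{g_0}$ then yields $v\Phi\zeta=0$. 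Involutivity, combined with the boundary determination on $U\setminus G$ and the uniqueness property of $U$, determines $\Phi$ uniquely on all of $U$, whence $\oP_x=\Ker\Phi(e)$ is unique.

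\textbf{Main obstacle.} The principal difficulty is establishing that the test section $t(g)=A'(g)\xi_0+L(g)h_0$ is Bott-annihilated along $\oS$. The first summand is handled as in Theorem~7.3 via Theorem~6.2. The hard part is the second, $g\mapsto L(g)h_0$, whose variation involves the renormalization $(\chi,\beta)$ in an a priori uncontrolled way; equivalently, one must show that the subbundle $F\subset\Omega_x^*P'|U$ is stable under $\nabla^{\Omega_x}$ along $\oS$. I would expect this to be extracted from the smoothness of the bundle isomorphism $B'|_H$ between $U\times H$ and $(\Omega_x^*P'|U)/E$, the complementary smoothness of $E$, and the continuity of $B$ up to $U\setminus G$; making this precise is the technical heart of the proof.
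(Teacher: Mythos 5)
Your overall strategy---reconstructing $\oP_x$ as the kernel at $e$ of an operator-valued function on $U$ that is involutive on $U\cap\In G$, determined by the action on $U\setminus G$, and then invoking the set-of-uniqueness property---is exactly the paper's. But your specific operators have a genuine gap, and it is precisely the one you flag at the end. By splitting $P'_{xg}=E_g\oplus F_g$ with $F_g$ the image of $B(g)H$ and using the pointwise isomorphism $L(g):H\to F_g$, you bake the renormalizing data $(\chi,\beta)$ into $\Phi$. Indeed, $L(g)h$ is the class of $\chi(g)\big(A_{g*}h-\beta(g)h\big)$ in $P'_{xg}$; comparing classes modulo $E_g$ one finds $\Phi_H(g)=\chi(g)^{-1}\Psi(g)$ on $U\cap G$, where $\Psi(g)=\big(B'(g)|H\big)^{-1}B'(g)$ is the operator the paper works with. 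Since $\chi$ is an arbitrary positive function with no involutivity property, $\Phi_H\zeta$ is in general \emph{not} involutive ($v(\Phi_H\zeta)=v(\chi^{-1})\,\Psi\zeta$ for $v\in\oS$), and the section $g\mapsto L(g)h_0$ is in general \emph{not} annihilated by the Bott connection: its covariant derivative picks up $v(\chi)\,A'(g)h_0$ and derivatives of the $E$-valued correction $\beta$. So the ``technical heart'' you defer cannot be supplied as stated---the subbundle $F$ is simply not $\nabla$-stable. A similar $\beta$-contamination affects $\Phi_\Xi$, and on $U\setminus G$ your $\Phi_H(g)$ need not reduce to the $H$-component in the decomposition $\bC T_xN=H\oplus B(g)^{-1}(A_{g*}\Xi)$, so action-determinacy at the boundary is also unproven.

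The paper's way out is to never choose a complement to $E$. Working in the quotient bundle $(\Omega_x^*P'|U)/E$, the correction $\beta(g)$, being $A_{g*}\Xi$-valued, dies, and the factor $\chi(g)$ cancels in $\Psi(g)=\big(B'(g)|H\big)^{-1}B'(g)$; thus $\Psi$ is insensitive to $(\chi,\beta)$, is continuous on $U$, and on $U\setminus G$ coincides with the projection $\psi(g)$ onto $H$ in the decomposition above. One then sets $\Phi(g)=\big(A'(g)|\Xi\big)^{-1}A'(g)\big(\Id_{\bC T_xN}-\Psi(g)\big)$ and differentiates the identity $A'(g)\zeta=A'(g)\Psi(g)\zeta+A'(g)\Phi(g)\zeta$ along $v\in\oS_g$. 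This identity involves only sections of the form $g\mapsto A'(g)\eta$ with $\eta$ fixed in $H$ or $\Xi$---all Bott-parallel by Theorem 6.2---so the computation from Theorem 7.3 applies verbatim and gives $A'(g)v(\Psi\zeta)=-A'(g)v(\Phi\zeta)\in A'(g)\Xi$, hence $B'(g)v(\Psi\zeta)=0$, hence $v(\Psi\zeta)=0$ and then $v(\Phi\zeta)=0$ by the two injectivity hypotheses. Uniqueness on $U$ then pins down $\Phi,\Psi$, and $\oP_x=\Ker\big(\Phi(e)+\Psi(e)\big)$. I recommend replacing your $\pi_F$ and $L$ by this quotient construction; no statement about a parallel complement is then needed.
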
    

In the compositions in (7.13), (7.14) the second maps are canonical projections.---Theorem 7.3 corresponds to the special case $H=(0)$ 
$\chi\equiv 1$, $\beta\equiv 0$. However, when $H$ is positive dimensional, typically $\chi(g)\to \infty$ when $g\to U\setminus G$. As with Theorem 7.3, in certain
situations crude assumptions on $P$ already guarantee that $\Xi, H, U,\chi$, and $\beta$ with the required properties can be found for a dense set
of $x\in N$; moreover $P_x$  can be reconstructed uniquely without even knowing what $\Xi$ and $H$ are. This is the case
with certain spaces of harmonic maps. However, the application of Theorems 7.6 and 7.3 to spaces of harmonic maps will have to wait until a
subsequent publication. 

\begin{proof}
Again, we will reconstruct $\oP_x$ from the action and data $\Xi, H$. Let $P,U$ be as in the theorem.
For $g\in U\setminus G$ denote by $\psi(g)\in\Hom (\bC T_xN, H)$ projection in the decomposition  (7.12), so that
\begin{equation}   
B(g)=B(g)\psi(g)\quad\text {mod } A_{g*}\Xi,\qquad \text{hence}\qquad B'(g)=B'(g)\psi(g).
\end{equation}  
Another consequence is that there are unique linear maps $\varphi(g):\bC T_xN\to\Xi$ such that
\begin{equation} 
B(g)=A_{g*}\varphi(g)+B(g)\psi(g),\qquad g\in U\setminus G;
\end{equation}
uniqueness follows since $A'(g)$, hence $A_{g*}$ are injective on $\Xi$.

At the same time $\Psi(g)=\big(B'(g)|H\big)^{-1}B'(g)\in\Hom(\bC T_xN,H)$ satisfies
\begin{equation}   
B'(g)=B'(g){\Psi}(g), \quad g\in U.
\end{equation}  
Note that $\Psi$ is continuous on $U$ and smooth over $U\cap\In G$. (7.17) means that 
$$
A'(g)\big(\zeta-\Psi(g)\zeta\big)\in A'(g)\Xi,\qquad \zeta\in\bC T_xN,\quad g\in U.
$$
Hence $\Phi(g)=\big(A'(g)|\Xi\big)^{-1}A'(g)\big(\Id_{\bC T_xN}-\Psi(g)\big)\in\Hom(\bC T_xN,\Xi)$ satisfies
\begin{equation}   
A'(g)(\zeta-{\Psi}(g)\zeta)=A'(g)\Phi(g)\zeta.
\end{equation}  
Using Theorem 6.2 as in the proof of Theorem 7.3 we differentiate (7.18) along a vector $v\in \oS_g$, $g\in U\cap \In G$ to obtain
\begin{equation}   
A'(g)v({\Psi}\zeta)=-A'(g)v(\Phi\zeta)\in A'(g)\Xi,
\end{equation}  
or $B'(g)v(\Psi\zeta)=0$. Since $B'(g)| H$ is injective, $v({\Psi}\zeta)=0$, and since $A'(g)|\Xi$ is injective, $v(\Phi\zeta)=0$ by
(7.19). To sum up, $\Phi\zeta$ and ${\Psi}\zeta$ are involutive. Comparing (7.15), (7.17), respectively (7.16), (7.18), we see 
$\Psi=\psi$ and $\Phi=\varphi$ on $U\setminus G$. Therefore $\Phi,\Psi$ are uniquely determined by 
$\varphi,\psi$, which in turn
are determined by the input in Theorem 7.6. Letting $g=e$ in (7.18) we obtain
\[
\zeta-{\Phi}(e)\zeta-\Psi(e)\zeta\in\oP_x \qquad \zeta\in \bC T_x N,
\]
hence $\Ker\big(\Phi(e)+\Psi(e)\big)\subset \oP_x$. In fact, the two are equal. Indeed, let
$\zeta\in\oP_x$. Clearly 
$A'(e)\zeta=\zeta'=0$, and similarly $B'(e)\zeta=0$ by (7.11), (7.13), (7.14). Therefore by (7.17) $B'(e)\Psi(e)\zeta=0$ and 
${\Psi}(e)\zeta=0$. But then (7.18) implies $A'(e)\Phi(e)\zeta=0$ and $\Phi(e)\zeta=0$. Thus 
$\oP_x=\Ker\big(\Phi(e)-\Psi(e)\big)$ is uniquely determined.
\end{proof}

\

\end{document}